\documentclass[final,1p,times]{elsarticle}
\usepackage{amsmath}
\usepackage{amssymb}
\usepackage{url}
\usepackage{amsfonts}
\usepackage{graphicx}
\usepackage{rotating}
\usepackage{floatflt,epsfig}
\usepackage{lineno,hyperref}
\usepackage{enumerate}
\usepackage{colortbl}
\usepackage{array,tabularx,tabulary,booktabs}
\usepackage{longtable}
\usepackage{multirow}
\usepackage{wrapfig}
\usepackage{subcaption}
\newcolumntype{^}{>{\currentrowstyle}}

\makeatletter
\def\ps@pprintTitle{%
   \let\@oddhead\@empty
   \let\@evenhead\@empty
   \let\@oddfoot\@empty
   \let\@evenfoot\@oddfoot
}
\makeatother

\setcounter{page}{1}
\newtheorem{lemma}{Lemma}

\newtheorem{theorem}{Theorem}

\newtheorem{proposition}{Proposition}

\newtheorem{construction}{Construction}

\bibliographystyle{elsarticle-num}

\begin{document}
\renewcommand{\abstractname}{Abstract}
\renewcommand{\refname}{References}
\renewcommand{\tablename}{Table}
\renewcommand{\arraystretch}{0.9}
\thispagestyle{empty}
\sloppy

\begin{frontmatter}
\title{Vertex connectivity of some classes of divisible design graphs}

\author[01,02]{Dmitry~Panasenko}
\ead{makare95@mail.ru}

\address[01]{Chelyabinsk State University, Brat'ev Kashirinyh st. 129, Chelyabinsk, 454021, Russia}

\address[02]{Krasovskii Institute of Mathematics and Mechanics, S. Kovalevskaja st. 16, Yekaterinburg, 620990, Russia}

\begin{abstract}
A $k$-regular graph is called a divisible design graph if its vertex set can be partitioned into $m$ classes of size $n$, such that two distinct vertices from the same class have exactly $\lambda_1$ common neighbours, and two vertices from different classes have exactly $\lambda_2$ common neighbours. In this paper, we find the vertex connectivity of some classes of divisible design graphs, in particular, we present examples of divisible design graphs, whose vertex connectivity is less than $k$, where $k$ is the degree of a vertex. We also show that the vertex connectivity of one series of divisible design graphs may differ from k by any power of 2.
\end{abstract}

\begin{keyword} Deza graph \sep divisible design graph \sep strongly regular graph \sep vertex connectivity
\vspace{\baselineskip}
\MSC[2010] 05C50\sep 05E10\sep 15A18
\end{keyword}
\end{frontmatter}

\section{Introduction}\label{sect1}
Deza graphs were introduced in~\cite{EFHHH1999} as a generalisation of strongly regular graphs. A \emph{strongly regular graph} (SRG for short) $G$ with parameters $(v,k,\lambda,\mu)$ is a $k$-regular graph with $v$~vertices such that any two adjacent vertices have $\lambda$ common neighbours and any two non-adjacent vertices have $\mu$ common neighbours. A \emph{Deza graph} $\Gamma$ with parameters $(v,k,b,a)$ is a $k$-regular graph with $v$ vertices for which the number of common neighbours of two distinct vertices takes just two values, $b$ or $a$, where $b \geqslant a$. A Deza graph of diameter 2 that is not a strongly regular graph is called a \emph{strictly Deza graph}.

A $k$-regular graph is called a \emph{divisible design graph} (DDG for short) if its vertex set can be partitioned into $m$ classes of size $n$, such that two distinct vertices from the same class have exactly $\lambda_1$ common neighbours, and two vertices from different classes have exactly $\lambda_2$ common neighbours. The definition implies that all divisible design graphs are Deza graphs. Divisible design graphs were first studied in master's thesis by M.A.~Meulenberg~\cite{M2008} and then studied in more detail in~\cite{CH2014,HKM2011}.

The vertex connectivity of a $k$-regular Cayley graph is at least $\frac{2}{3}(k+1)$ (see~\cite{GR2001}). The vertex connectivity of a strongly regular graph is equal to its valency, as was proved by A.E.~Brouwer and D.M.~Mesner in~\cite{BM1985}. In~\cite{BK2009} the same result was obtained in general for distance-regular graphs.

The vertex connectivity of Deza graphs obtained from strongly regular graphs by dual Seidel switching was studied in~\cite{GGK2014} by A.L.~Gavrilyuk, S.~Goryainov and V.V.~Kabanov and in~\cite{GP2019} by S.~Goryainov and D.~Panasenko. As a result of these studies, an infinite series of Deza graphs with vertex connectivity $k - 1$ was found. 

In this paper we find the vertex connectivity of some classes of divisible design graphs. We focus on the cases when the vertex connectivity is less than $k$. In particular, we present DDGs with vertex connectivity $k - 1$ and, for any positive integer $t$, we present a DDG whose vertex connectivity equals $k - 2^t$ $(k > 2^t)$.

This paper is organised as follows. In Section~\ref{sect2}, we give some definitions, notations and preliminary results on SRGs, DDGs and vertex connectivity. In Section~\ref{sect3}, we present results on vertex connectivity. In Section~\ref{sect4}, we discuss the problem of finding the vertex connectivity of DDGs in general.

\section{Preliminaries}\label{sect2}

A strongly regular graph $G$ is called \emph{primitive} if both $G$ and its complement are connected. 

\begin{lemma}[{\cite[Theorem 1.3.1]{BCN1989}}]\label{teosrg}
Let $G$ be a primitive strongly regular graph with parameters $(v,k,\lambda,\mu)$. Then the following statements hold.

{\rm (1)} $G$ has three distinct eigenvalues $k,r,s$, where $k>r>0>s$. Moreover, $r$~and~$s$ satisfy the quadratic equation $x^2+(\mu-\lambda)x+(\mu-k)=0.$

{\rm (2)} If the eigenvalues~$r$ and~$s$ have equal multiplicities, then $r=(-1+\sqrt{v})/2$ and ~~~~~~~~~~~~~~~~~~~~~~~~~~~~~~~~~~ $s=(-1-\sqrt{v})/2$. Otherwise, $r$ and $s$ are integers.

{\rm (3)} The equalities $\mu=k+rs$ and $\lambda=\mu+r+s$ hold.
\end{lemma}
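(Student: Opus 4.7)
The plan is to analyse the spectrum of the adjacency matrix $A$ of $G$ via the SRG identity
\[
A^2 = kI + \lambda A + \mu(J - I - A).
\]
Since $G$ is $k$-regular, $A\mathbf{1} = k\mathbf{1}$, so $k$ is an eigenvalue, and connectedness of $G$ (which follows from primitivity) makes it simple. On the orthogonal complement $\mathbf{1}^\perp$ the matrix $J$ vanishes, so every other eigenvalue $\theta$ of $A$ satisfies
\[
\theta^2 + (\mu - \lambda)\theta + (\mu - k) = 0.
\]
Primitivity gives $0 < \mu < k$ (if $\mu = 0$ then $G$ is a disjoint union of cliques and is disconnected; if $\mu = k$ then $G$ is complete multipartite and $\bar G$ is disconnected), so the discriminant $(\lambda - \mu)^2 + 4(k - \mu)$ is strictly positive, producing two distinct real roots $r > s$. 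Vieta's formulas give $r + s = \lambda - \mu$ and $rs = \mu - k < 0$, which forces $r > 0 > s$ and proves (1) and (3) simultaneously.

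For (2), let $f$ and $g$ denote the multiplicities of $r$ and $s$. The identities $1 + f + g = v$ and $\mathrm{tr}(A) = k + fr + gs = 0$ form a linear system. If $r, s \in \mathbb{Q}$, then $r$ and $s$ are integers, being algebraic integers (roots of a monic integer quadratic). If instead $r, s \notin \mathbb{Q}$, then $r$ and $s$ are Galois conjugates over $\mathbb{Q}$, and rationality of $fr + gs$ forces $f = g = (v-1)/2$. Writing $c = \lambda - \mu \in \mathbb{Z}$, the trace relation becomes $k = -c(v-1)/2$; substituting this into the standard SRG counting identity $k(k - \lambda - 1) = (v - k - 1)\mu$ expresses $\mu$ as a polynomial in $c$ and $v$, and a direct computation collapses the discriminant to $D = -cv(c+2)$. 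Since $D > 0$ and $c$ is a negative integer (as $k > 0$), the only admissible value is $c = -1$, giving $D = v$ and $r, s = (-1 \pm \sqrt{v})/2$.

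The hardest step is this final one: the rational/irrational dichotomy is essentially immediate from integrality of the multiplicities, but ruling out conference-like behaviour at any value $c \leq -2$ requires carefully combining both trace identities with the SRG counting identity to reduce the discriminant to the tractable polynomial $-cv(c+2)$, so that positivity forces $c = -1$ uniquely and thereby pins down the conference-graph parameters.
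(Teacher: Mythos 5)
Your proof is correct and follows essentially the standard spectral argument used by the paper's cited source \cite[Theorem 1.3.1]{BCN1989} (the paper itself gives no proof, only the citation): the quadratic satisfied by the restricted eigenvalues on $\mathbf{1}^\perp$, Vieta's formulas for (1) and (3), and the trace/multiplicity dichotomy for (2), with your reduction of the discriminant to $-cv(c+2)$ being only a minor computational variant of the usual bound $0<-c<2$ obtained from $2k+(v-1)(\lambda-\mu)=0$. Two small presentational points: your conference-parameter computation, though placed under the irrationality branch, uses only $f=g$ and therefore proves exactly the implication required in (2) (which matters, since $f=g$ can occur with integer eigenvalues, e.g.\ the $(9,4,1,2)$ Paley graph); and in (1) one should add that primitivity makes $G$ non-complete, so it has at least three distinct eigenvalues and both roots $r,s$ of the quadratic are indeed attained, with $r<k$ following from the simplicity of $k$.
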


A strongly regular graph with $s = -2$ is called a \emph{Seidel graph}. These graphs were characterised in \cite[Theorem 3.12.4]{BCN1989}.

\medskip
An incidence structure with $v$ points and $v$ blocks of constant size $k$ is called a \emph{symmetric $2$-$(v, k, \lambda)$-design} if any pair of points occur together in exactly $\lambda$~blocks and any two blocks intersect in exactly $\lambda$ points. 

An incidence structure on $v$ points with constant block size $k$ is a \emph{(group) divisible design} whenever the point set can be partitioned into $m$ classes of size $n$, such that two points from one class occur together in exactly $\lambda_1$ blocks, and two points from different classes occur together in exactly $\lambda_2$ blocks. A divisible design $D$ is called \emph{symmetric} if the dual of $D$ (that is, the design with the transposed incidence matrix) is again a divisible design with the same parameters as~$D$. Equivalently, DDGs can be defined as graphs whose adjacency matrix is the incidence matrix of a symmetric divisible design. A DDG with $m = 1$, $n = 1$ or $\lambda_1 = \lambda_2$ is called \emph{improper} (these DDGs are $(v,k,\lambda)$-graphs), otherwise it is called \emph{proper}. A $(v,k,\lambda)$-graph is a $k$-regular graph on $v$ vertices with the property that any two distinct vertices have exactly $\lambda$ common neighbours, that is, a strongly regular graph with $\lambda = \mu$, a~clique or a coclique.

For a positive integer $t$, denote by $I_t$, $O_t$ and $J_t$ the identity matrix, the zero matrix and the all-ones matrix of size $t \times t$, respectively. For positive integers $m$ and $n$, denote by $K_{(m,n)}$ the matrix $I_m \otimes J_n$. Note that $K_{(m,n)} = diag(J_n, \ldots, J_n)$. A graph $\Gamma$ is a DDG with parameters $(v, k, \lambda_1, \lambda_2, m, n)$ if and only if its adjacency matrix $A$ satisfies
$$A^2 = kI_v + \lambda_1(K_{(m,n)} - I_v) + \lambda_2(J_v - K_{(m,n)}).$$

The formula for $A^2$ also gives strong information about the eigenvalues of $A$ and their multiplicities (see the following two lemmas).

\begin{lemma}[{\cite[Lemma 2.1]{HKM2011}}]\label{l1}
$A$ has at most five distinct eigenvalues $k$, $\sqrt{k - \lambda_1}$, $-\sqrt{k - \lambda_1}$, $\sqrt{k^2 - \lambda_2v}$ and $-\sqrt{k^2 - \lambda_2v}$ with corresponding multiplicities $1, f_1, f_2, g_1$ and $g_2$, where $f_1 + f_2 = m(n - 1)$ and $g_1 + g_2 = m - 1$.
\end{lemma}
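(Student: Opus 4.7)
The plan is to simultaneously block-diagonalise $A$ using the common eigenspace decomposition of $J_v$ and $K_{(m,n)}$. First I would rewrite the given matrix identity as
$$A^2 = (k-\lambda_1)I_v + (\lambda_1 - \lambda_2)K_{(m,n)} + \lambda_2 J_v.$$
Since $A$ is $k$-regular, $AJ_v = J_v A = kJ_v$. When $\lambda_1 \neq \lambda_2$, solving the identity above for $K_{(m,n)}$ exhibits it as a polynomial in $A$, $I_v$, and $J_v$, which forces $[A, K_{(m,n)}] = 0$.

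Next I would decompose $\mathbb{R}^v$ into the three common eigenspaces of $J_v$ and $K_{(m,n)}$: the line $U_0 = \langle \mathbf{1}\rangle$ (on which $J_v = vI$, $K_{(m,n)} = nI$); the $(m-1)$-dimensional subspace $U_1$ of vectors constant on each class with zero total sum (on which $J_v = 0$, $K_{(m,n)} = nI$); and the $m(n-1)$-dimensional subspace $U_2$ of vectors summing to zero on each class (on which $J_v = K_{(m,n)} = 0$). Because $A$ commutes with both $J_v$ and $K_{(m,n)}$, it preserves each $U_i$, so $A^2$ acts as a scalar on each. Plugging the eigenvalue data of $J_v$ and $K_{(m,n)}$ into the rewritten identity gives $A^2|_{U_0} = k^2 I$, $A^2|_{U_2} = (k-\lambda_1) I$, and $A^2|_{U_1} = (k^2-\lambda_2 v) I$, where the last equality uses the ``$U_0$ identity'' $k^2 = k + (n-1)\lambda_1 + n(m-1)\lambda_2$ that falls out of the $U_0$ computation. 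Since $A|_{U_i}$ is real symmetric, its eigenvalues are the real square roots of these scalars; this yields exactly the five candidate eigenvalues in the lemma, with multiplicities constrained by $\dim U_1 = m - 1$ and $\dim U_2 = m(n-1)$ as claimed.

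The only subtlety is the improper case $\lambda_1 = \lambda_2$ (together with the degenerate cases $m = 1$ or $n = 1$), where $K_{(m,n)}$ cannot be recovered from the $A^2$ identity a priori. In those situations $A^2$ is already a polynomial in $I_v$ and $J_v$, so the coarser decomposition $\mathbb{R}^v = U_0 \oplus U_0^\perp$ is enough, and the two pairs of ``inner'' eigenvalues coincide via the $(v,k,\lambda)$-graph identity $(v-1)\lambda_1 = k(k-1)$ applied to the improper DDG. I do not anticipate any serious obstacle: the main conceptual step is recognising the commutation $[A, K_{(m,n)}] = 0$, after which the argument reduces to linear-algebraic bookkeeping between the three simultaneous eigenspaces of $J_v$ and $K_{(m,n)}$.
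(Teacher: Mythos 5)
Your argument is correct and is essentially the standard proof of this lemma from the cited source \cite{HKM2011} (the present paper only quotes the result without reproducing a proof): one observes that $A$ commutes with $K_{(m,n)}$ when $\lambda_1\neq\lambda_2$, restricts $A^2$ to the simultaneous eigenspaces of $J_v$ and $K_{(m,n)}$ of dimensions $1$, $m-1$ and $m(n-1)$, and reads off the scalars $k^2$, $k^2-\lambda_2 v$ and $k-\lambda_1$. Your treatment of the improper case $\lambda_1=\lambda_2$ via the identity $k(k-1)=(v-1)\lambda$ correctly shows the inner eigenvalues coincide there, so no gap remains.
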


\begin{lemma}[{\cite[Theorem 2.2]{HKM2011}}]\label{l2}
Consider a proper DDG with parameters $(v, k, \lambda_1, \lambda_2,$ $m, n)$ and eigenvalue multiplicities $f_1, f_2, g_1, g_2$. Then:

{\rm (1)} $k - \lambda_1$ or $k^2 - \lambda_2v$ is a nonzero square;

{\rm (2)} if $k - \lambda_1$ is not a square, then $f_1 = f_2 =m(n -1)/2$;

{\rm (3)} if $k^2 - \lambda_2v$ is not a square, then $g_1 = g_2 = (m -1)/2$.
\end{lemma}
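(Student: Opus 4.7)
The plan is to combine two facts about $A$: its characteristic polynomial lies in $\mathbb{Z}[x]$ (so $\mathrm{Gal}(\overline{\mathbb{Q}}/\mathbb{Q})$-conjugate eigenvalues occur with equal multiplicity), and $\mathrm{tr}(A) = 0$ because $A$ has zero diagonal. Unpacking the trace condition via Lemma~\ref{l1} gives
\[
k + (f_1 - f_2)\sqrt{k - \lambda_1} + (g_1 - g_2)\sqrt{k^2 - \lambda_2 v} = 0. \qquad (\ast)
\]

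For parts (2) and (3) I would apply Galois conjugation directly. If $k - \lambda_1$ is a positive non-square integer, then $x^2 - (k - \lambda_1)$ is the irreducible minimal polynomial of $\sqrt{k - \lambda_1}$ over $\mathbb{Q}$, and since the characteristic polynomial of $A$ has integer coefficients this factor must appear to some integer power, pairing $\pm\sqrt{k-\lambda_1}$ with equal multiplicity. Hence $f_1 = f_2$, and since $f_1 + f_2 = m(n-1)$ both equal $m(n-1)/2$. The argument for (3) is identical with $k^2 - \lambda_2 v$ in place of $k - \lambda_1$.

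For part (1) I would argue by contradiction: assume neither $k - \lambda_1$ nor $k^2 - \lambda_2 v$ is a nonzero square. A case split on whether each of these integers is $0$ or a positive non-square shows in every subcase that the irrational contributions to $(\ast)$ must cancel — a zero collapses the relevant pair of radicals to the single eigenvalue $0$ (killing that term outright), while a positive non-square triggers the Galois argument above (forcing the corresponding coefficient in $(\ast)$ to vanish). In every combination $(\ast)$ reduces to $k = 0$, which is impossible for a proper DDG, since $\lambda_1 \neq \lambda_2$ rules out the empty graph. I expect the most delicate subcase to be $k - \lambda_1 = k^2 - \lambda_2 v$ a positive non-square: here the four radicals coincide as a single pair of eigenvalues with merged multiplicities $f_i + g_i$, so one cannot separate $f_1 = f_2$ from $g_1 = g_2$; nevertheless Galois still gives $f_1 + g_1 = f_2 + g_2$, which is exactly what is needed to reduce $(\ast)$ to $k = 0$ and close the argument.
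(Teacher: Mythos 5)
The paper gives no proof of Lemma~\ref{l2}: it is imported verbatim from \cite[Theorem 2.2]{HKM2011}, and your argument --- zero trace of $A$ combined with the integrality of its characteristic polynomial, i.e.\ the identity $k+(f_1-f_2)\sqrt{k-\lambda_1}+(g_1-g_2)\sqrt{k^2-\lambda_2 v}=0$ plus Galois pairing of conjugate irrational eigenvalues --- is exactly the standard proof from that source, and it is correct. One small point to make explicit: as you yourself observe in part (1), when $k-\lambda_1=k^2-\lambda_2 v$ is a common positive non-square the characteristic polynomial only forces $f_1+g_1=f_2+g_2$, so the bare Galois step you use for parts (2) and (3) does not immediately give $f_1=f_2$ (resp.\ $g_1=g_2$) in that configuration. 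This is harmless because part (1), which you prove independently and whose delicate subcase you handle correctly, shows that if $k-\lambda_1$ is not a square then $k^2-\lambda_2 v$ is a nonzero square (and vice versa), so the coincidence never occurs under the hypotheses of (2) or (3); alternatively, once one radical is known to be an integer, the trace identity alone forces the coefficient of the remaining irrational radical to vanish. Stating (1) first, or adding this one-line remark to (2) and (3), closes the presentational loop; there is no mathematical gap.
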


Let $V_1 \cup \ldots \cup V_t$ be a partition of the vertex set of a graph $\Gamma$ with the property that, for any $i,j \in \{1, \ldots, t\}$, every vertex of $V_i$ has exactly $r_{ij}$ neighbours in $V_j$ for some constant $r_{ij}$ (depending on $i$ and $j$). Then $V_1 \cup \ldots \cup V_t$ is called an \emph{equitable $t$-partition} of $\Gamma$. The matrix $R = (r_{ij})_{t \times t}$ is called the \emph{quotient matrix} of the equitable partition.

The vertex partition from the definition of a DDG is called the \emph{canonical partition}.

\begin{lemma}[{\cite[Theorem 3.1]{HKM2011}}]\label{l3}
The canonical partition of the vertex set of a DDG is equitable, and the quotient matrix $R$ satisfies $$R^2 = RR^T = (k^2 - \lambda_2v)I_m +\lambda_2nJ_m.$$
Moreover, the eigenvalues of $R$ are $k$, $\sqrt{k^2 - \lambda_2v}$ and $-\sqrt{k^2 - \lambda_2v}$ with corresponding multiplicities $1$, $g_1$ and $g_2$, where $g_1$ and $g_2$ are given by Lemmas~\ref{l1} and \ref{l2}.
\end{lemma}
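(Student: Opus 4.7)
The plan is to encode the canonical partition by its $v\times m$ characteristic matrix $C$ (where $C_{x,i}=1$ iff $x\in V_i$) and to derive the claims from the defining identity for $A^2$. The relevant algebraic identities are $CC^T=K_{(m,n)}$, $C^TC=nI_m$, and $\mathbf{1}_v=C\mathbf{1}_m$, from which $K_{(m,n)}C=nC$ and $J_vC=nCJ_m$ follow immediately. Equitability of the partition is equivalent to the existence of an $m\times m$ matrix $R$ with $AC=CR$; since $\tfrac{1}{n}CC^T$ is the orthogonal projection onto $\mathrm{col}(C)$ and $A$ is symmetric, this reduces to showing that $A$ commutes with $K_{(m,n)}$. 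In the proper case I would rewrite the defining relation as
\[
(\lambda_1-\lambda_2)K_{(m,n)}=A^2-(k-\lambda_1)I_v-\lambda_2 J_v,
\]
which exhibits $K_{(m,n)}$ as a polynomial in $A$ and $J_v$; both commute with $A$ (using $k$-regularity for $AJ_v=J_vA=kJ_v$), hence so does $K_{(m,n)}$. The degenerate cases ($m=1$, $n=1$, or $\lambda_1=\lambda_2$) are either trivial or handled by direct inspection.

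Once $AC=CR$ is available, I would compute $A^2 C$ in two ways. On one side $A^2C=A(CR)=CR^2$; on the other, substituting the defining expression for $A^2$ and using $K_{(m,n)}C=nC$ and $J_vC=nCJ_m$, one obtains $A^2C=\bigl(k-\lambda_1+n(\lambda_1-\lambda_2)\bigr)C+n\lambda_2\,CJ_m$. Cancelling the full-column-rank matrix $C$ gives $R^2=\alpha I_m+\lambda_2 n J_m$ with $\alpha=k-\lambda_1+n(\lambda_1-\lambda_2)$. The row-sum condition $R\mathbf{1}_m=k\mathbf{1}_m$ (each vertex has degree $k$) forces $\alpha+\lambda_2 nm=k^2$, so $\alpha=k^2-\lambda_2 v$, matching the claimed formula. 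The equality $RR^T=R^2$ then follows because $R=R^T$: for $i\ne j$ the entry $nR_{ij}$ counts edges between $V_i$ and $V_j$, which is symmetric in $i$ and $j$.

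For the spectrum of $R$: the matrix $R^2=(k^2-\lambda_2 v)I_m+\lambda_2 n J_m$ has eigenvalues $k^2$ on $\mathbf{1}_m$ (simple) and $k^2-\lambda_2 v$ on $\mathbf{1}_m^{\perp}$ (multiplicity $m-1$). Since $R\mathbf{1}_m=k\mathbf{1}_m$, the eigenvalue $k$ is simple, and the remaining $m-1$ eigenvalues of $R$ are $\pm\sqrt{k^2-\lambda_2 v}$ with some multiplicities. To identify these with the $g_1,g_2$ of Lemma~\ref{l1}, observe that any eigenvector $u$ of $R$ with eigenvalue $\mu$ lifts to a nonzero eigenvector $Cu$ of $A$ with the same eigenvalue, since $A(Cu)=(AC)u=CRu=\mu Cu$ and $C$ is injective. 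Hence the multiplicities in $R$ are bounded above by $g_1,g_2$, and since both pairs sum to $m-1$ (the latter by Lemma~\ref{l1}), they must coincide.

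The main obstacle is the commutation step in the improper regime $\lambda_1=\lambda_2$, where the factor $(\lambda_1-\lambda_2)$ cannot be inverted and $K_{(m,n)}$ must be treated by separate inspection of the $(v,k,\lambda)$-graph structure; modulo that point, all remaining steps are routine matrix manipulations once $AC=CR$ is established.
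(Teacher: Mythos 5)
This lemma is stated in the paper as a citation of \cite[Theorem 3.1]{HKM2011}; the paper itself contains no proof, so there is nothing in-text to compare your argument with except the original source, whose standard proof your write-up essentially reconstructs. The core of your argument is correct for proper DDGs: in that case $(\lambda_1-\lambda_2)K_{(m,n)}=A^2-(k-\lambda_1)I_v-\lambda_2 J_v$ exhibits $K_{(m,n)}$ as a polynomial in $A$ and $J_v$, so $A$ commutes with the projection $\tfrac1n CC^{\top}$ and $AC=CR$; computing $A^2C$ and cancelling the injective $C$ gives $R^2=(k^2-\lambda_2 v)I_m+\lambda_2 nJ_m$ (your two expressions for $\alpha$ agree by the usual parameter identity); $R=R^{\top}$ because all classes have the same size; and the lift $u\mapsto Cu$ correctly bounds the multiplicities of $\pm\sqrt{k^2-\lambda_2 v}$ in $R$ by those in $A$.

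Two points need repair. First, ``$R\mathbf{1}_m=k\mathbf{1}_m$, hence $k$ is a simple eigenvalue of $R$'' is not an argument: exhibiting one eigenvector says nothing about multiplicity. What does work is that $R$ is symmetric, hence preserves $\mathbf{1}_m^{\perp}$, on which $R^2=(k^2-\lambda_2 v)I$ forces all eigenvalues to be $\pm\sqrt{k^2-\lambda_2 v}$; these differ from $k$ unless $\lambda_2=0$, and in that residual case one either invokes connectivity (Perron--Frobenius, or the Construction~\ref{c1} description) or notes that the attribution of multiplicities is conventional when eigenvalues coincide. Without simplicity of $k$ your counting step ``both pairs sum to $m-1$'' is unjustified. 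Second, the improper case $\lambda_1=\lambda_2$ cannot be ``handled by direct inspection'': there the DDG condition is satisfied by \emph{every} partition into classes of size $n$, and such a partition is generally not equitable --- e.g.\ in the $4\times4$ rook's graph, a $(16,6,2)$-graph, one can pair up the vertices so that one class has vertices with different numbers of neighbours in another class. So the statement must be read for proper DDGs (which is how it is used in this paper, e.g.\ in Lemma~\ref{lwr}), not proved in the improper regime. A final cosmetic remark: identifying the $R$-multiplicities with $g_1,g_2$ via $g_1+g_2=m-1$ from Lemma~\ref{l1} is fine if Lemma~\ref{l1} is taken as a black box, but in the source that count is itself derived from the quotient structure, so be aware the logical order there is the reverse of yours.
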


A graph is called \emph{walk-regular}, whenever for every $l \geqslant 2$ the number of closed walks of length $l$ at a vertex $x$ is independent of the choice of $x$.

\begin{lemma}[{\cite[Theorem 4.3]{CH2014}}]\label{lwr}
A proper DDG is walk-regular if and only if the quotient matrix $R$ has constant diagonal.
\end{lemma}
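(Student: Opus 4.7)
The plan is to connect the diagonal entries of $A^l$ to those of $R^l$ by exploiting the equitability of the canonical partition (Lemma~\ref{l3}) together with the defining quadratic identity
$$A^2 = kI_v + \lambda_1(K_{(m,n)} - I_v) + \lambda_2(J_v - K_{(m,n)})$$
of a DDG. First I would introduce the $v \times m$ characteristic matrix $P$ of the canonical partition and record two facts: $K_{(m,n)} = PP^T$, and $AP = PR$ by equitability, so $A^l P = P R^l$ for every $l \geqslant 0$. Combining these gives the key identity
$$(A^l K_{(m,n)})_{xx} = (P R^l P^T)_{xx} = (R^l)_{c(x),c(x)},$$
where $c(x)$ denotes the class containing $x$.

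Next I would right-multiply the DDG identity by $A^l$ and use $A^l J_v = k^l J_v$ to derive the matrix recurrence
$$A^{l+2} = (k-\lambda_1)A^l + (\lambda_1 - \lambda_2) A^l K_{(m,n)} + \lambda_2 k^l J_v,$$
whose diagonal, via the key identity above, reads
$$(A^{l+2})_{xx} = (k-\lambda_1)(A^l)_{xx} + (\lambda_1-\lambda_2)(R^l)_{c(x),c(x)} + \lambda_2 k^l.$$
This single scalar recurrence will drive both implications of the lemma.

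For $(\Rightarrow)$ I would specialise to $l = 1$, obtaining $(A^3)_{xx} = (\lambda_1-\lambda_2) R_{c(x),c(x)} + \lambda_2 k$; walk-regularity forces the left side to be independent of $x$, and since the DDG is proper one has $\lambda_1 \neq \lambda_2$, so $R$ must have constant diagonal. For $(\Leftarrow)$ I would first observe that Lemma~\ref{l3} gives $R^2 = (k^2-\lambda_2 v) I_m + \lambda_2 n J_m$, hence every even power $R^{2t}$ is a polynomial in $I_m$ and $J_m$ and has constant diagonal automatically, while odd powers $R^{2t+1} = R \cdot R^{2t}$ have diagonal of the form $\alpha_t R_{ii} + \beta_t$ (using $RJ_m = kJ_m$), hence constant as soon as $R$ has constant diagonal. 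Starting from the obviously constant diagonals of $A^0$, $A^1$, $A^2$, the recurrence then yields by induction on $l$ that $(A^l)_{xx}$ is independent of $x$ for all $l \geqslant 0$, which is walk-regularity.

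I do not anticipate any serious obstacle: the only substantive point is the identity $(A^l K_{(m,n)})_{xx} = (R^l)_{c(x),c(x)}$, after which both directions reduce to reading off the recurrence. Properness of the DDG enters only in the forward direction, and only through $\lambda_1 \neq \lambda_2$ as the factor that lets us divide through in the $l=1$ case.
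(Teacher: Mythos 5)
Your argument is correct and complete. Note that the paper itself offers no proof of this lemma --- it is quoted from \cite[Theorem 4.3]{CH2014} --- so there is nothing internal to compare against; your write-up is a valid self-contained reconstruction. The key identity $(A^l K_{(m,n)})_{xx} = (R^l)_{c(x),c(x)}$ via $K_{(m,n)} = PP^T$ and $A^lP = PR^l$ is sound, the recurrence obtained from the defining equation together with $A^lJ_v = k^lJ_v$ is correct, properness ($\lambda_1 \neq \lambda_2$) is used exactly where it is needed in the forward direction, and the parity analysis of $\operatorname{diag}(R^l)$ using $R^2 = (k^2-\lambda_2 v)I_m + \lambda_2 n J_m$ and $RJ_m = kJ_m$ settles the converse by induction. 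One cosmetic slip: you say ``right-multiply the DDG identity by $A^l$'' but then write the term as $A^lK_{(m,n)}$, which comes from left multiplication; this is harmless, since $A^l$ and $K_{(m,n)}$ are symmetric, so $\operatorname{diag}(A^lK_{(m,n)}) = \operatorname{diag}(K_{(m,n)}A^l)$ and the diagonal recurrence is unaffected either way.
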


\subsection{Constructions of DDGs}\label{sect2_1}

The \emph{incidence graph} of a design with incidence matrix $N$ is the bipartite graph with adjacency matrix
$\begin{bmatrix}
O & N\\
N^\top & O
\end{bmatrix}$.

\begin{construction}[{\cite[Construction 4.1]{HKM2011}}]\label{c1}
The incidence graph of a symmetric $2$-$(n,k,\lambda_1)$-design with $1 < k \leqslant n$ is a proper DDG with parameters $(2n, k, \lambda_1, \lambda_2, 2, n)$, where $\lambda_2 = 0$.
\end{construction}

\begin{proposition}[{\cite[Proposition 4.3]{HKM2011}}]\label{p1}
For a proper connected DDG $\Gamma$ with parameters $(v, k, \lambda_1, \lambda_2, m, n)$, the following statements are equivalent.

{\rm (1)} $\lambda_2 = 0$.

{\rm (2)} $\Gamma$ comes from Construction 1.
\end{proposition}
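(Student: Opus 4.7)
The direction $(2) \Rightarrow (1)$ is already contained in Construction~\ref{c1}, so I only need to plan $(1) \Rightarrow (2)$. The strategy is to use the quotient matrix of the canonical partition to show first that the graph is bipartite with the two classes forming the parts (i.e.\ $m=2$), and then to read off the design from the block form of the adjacency matrix.

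The plan is to start from Lemma~\ref{l3} with $\lambda_2=0$, which gives $R^2=k^2 I_m$. Because every class has the same size $n$, a standard double-counting of edges between $V_i$ and $V_j$ shows $R$ is symmetric, so $RR^T=R^2$ is consistent. Now $R$ has nonnegative entries, constant row sum $k$, and $(R^2)_{ii}=\sum_j r_{ij}^2=k^2$; combining $\sum_j r_{ij}=k$ with $\sum_j r_{ij}^2=k^2$ and $0\le r_{ij}\le k$ forces each row to have a single nonzero entry, equal to $k$. Hence $R=kP$ for a symmetric permutation matrix $P$, and $P$ corresponds to an involution $\sigma$ on $\{1,\dots,m\}$.

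Next I would use connectivity to pin down $m=2$. If $\sigma$ fixed some index $i$, then $r_{ii}=k$, so every neighbour of every vertex of $V_i$ lies in $V_i$, making $V_i$ a union of connected components; connectedness of $\Gamma$ then forces $\Gamma=V_i$, i.e.\ $m=1$, contradicting properness. So $\sigma$ is fixed-point free, and for each orbit $\{i,\sigma(i)\}$ the set $V_i\cup V_{\sigma(i)}$ is a union of connected components of $\Gamma$. Connectedness leaves exactly one such orbit, giving $m=2$.

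Finally I would identify the design. With $m=2$ and $|V_1|=|V_2|=n$, all edges go between $V_1$ and $V_2$, so
\[
A=\begin{pmatrix} O & N \\ N^\top & O \end{pmatrix}
\]
for some $n\times n$ $0/1$-matrix $N$ whose row and column sums equal $k$. Squaring and comparing with the DDG identity $A^2=kI_v+\lambda_1(K_{(2,n)}-I_v)$ (using $\lambda_2=0$) gives $NN^\top=N^\top N=(k-\lambda_1)I_n+\lambda_1 J_n$, which is exactly the incidence-matrix relation of a symmetric $2$-$(n,k,\lambda_1)$-design; the bound $k\le n$ is automatic from the sizes of $N$, and $k>1$ follows because $k=1$ would make $\Gamma$ a single edge, hence improper. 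The main obstacle is really the middle step: once $R=kP$ is in hand, ruling out fixed classes and enforcing $m=2$ (rather than merely $m$ even) both rely on the connectedness hypothesis and must be applied carefully.
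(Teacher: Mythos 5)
Your plan is sound, but note that the paper never proves this statement itself: Proposition~\ref{p1} is quoted from \cite[Proposition 4.3]{HKM2011} without proof, so there is no internal argument to match against, and your write-up is a legitimate self-contained substitute. Your route goes through Lemma~\ref{l3}: with $\lambda_2=0$ you get $RR^\top=k^2I_m$, and the combination $\sum_j r_{ij}=k$, $\sum_j r_{ij}^2=k^2$, $r_{ij}\ge 0$ does force one entry $k$ per row; together with the symmetry of $R$ (equal class sizes) this gives $R=kP$ for an involution, and your use of connectedness to kill fixed classes and reduce to a single orbit, hence $m=2$, is exactly where the hypotheses ``proper'' and ``connected'' are needed and is handled correctly. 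The block computation $NN^\top=N^\top N=(k-\lambda_1)I_n+\lambda_1J_n$ with constant line sums $k$ then identifies $\Gamma$ as the incidence graph of a symmetric $2$-$(n,k,\lambda_1)$-design, and your disposal of the edge case $k=1$ via improperness is fine. The more usual argument (in the spirit of \cite{HKM2011} and of Lemmas~\ref{l1}--\ref{l2}) is spectral: $\lambda_2=0$ makes $\pm\sqrt{k^2-\lambda_2 v}=\pm k$, and since a connected $k$-regular graph has $k$ with multiplicity $1$ while $g_1+g_2=m-1\ge 1$, the eigenvalue $-k$ must occur, so $\Gamma$ is bipartite and the design is read off from the bipartition. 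That route gets bipartiteness in one line but still has to match the bipartition with the canonical classes; your quotient-matrix argument does both at once and stays entirely combinatorial, at the cost of the intermediate bookkeeping about $P$ and its orbits. Either way the equivalence holds, so I see no gap in your proposal.
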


\begin{construction}[{\cite[Construction 4.4]{HKM2011}}]\label{c2}
Let $A'$ be the adjacency matrix of a connected $(m,k',\lambda')$-graph with $1 < k' < m$. Then, for any positive integer $n$, $n > 1$, the matrix $A' \otimes J_n$ is the adjacency matrix of a proper DDG with parameters $(mn, k, \lambda_1, \lambda_2, m, n)$, where $k = \lambda_1 = nk'$ and $\lambda_2 = n\lambda'$.
\end{construction}

\begin{proposition}[{\cite[Proposition 4.5]{HKM2011}}]\label{p2}
For a proper connected DDG $\Gamma$ with parameters $(v, k, \lambda_1, \lambda_2, m, n)$, the following statements are equivalent.

{\rm (1)} $\lambda_1 = k$.

{\rm (2)} $\Gamma$ comes from Construction 2.
\end{proposition}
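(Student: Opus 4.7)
The plan is to prove both implications, with $(2) \Rightarrow (1)$ being a direct computation from the tensor structure and $(1) \Rightarrow (2)$ being the substantive direction. For $(2) \Rightarrow (1)$, if $A = A' \otimes J_n$ with $A'$ the adjacency matrix of a $k'$-regular graph on $m$ vertices, then two vertices in the same canonical class $V_i = \{i\} \times [n]$ correspond to identical rows of $A$, so they have $nk' = k$ common neighbours, giving $\lambda_1 = k$; the value $\lambda_2 = n \lambda'$ follows similarly from the $(m,k',\lambda')$-graph hypothesis.

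For $(1) \Rightarrow (2)$, the first step I would carry out is to show that each canonical class $V_i$ is a coclique whose vertices all share a common open neighbourhood $N_i$. Given distinct $u, v \in V_i$, the hypothesis gives $|N(u) \cap N(v)| = \lambda_1 = k = |N(u)| = |N(v)|$, which forces $N(u) = N(v)$. Moreover, if $u \sim v$, then $v \in N(u) \setminus N(v)$ (since $v \notin N(v)$), contradicting $N(u) = N(v)$; hence $V_i$ is a coclique. The second step is to show that $N_i$ is a union of full canonical classes: if $w \in V_j \cap N_i$ for some $j \neq i$, then applying Step~1 to $V_j$ gives $N(w) = N(w')$ for every $w' \in V_j$, and since every $u \in V_i$ lies in $N(w)$ it also lies in $N(w')$, proving $V_j \subseteq N_i$. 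Consequently, the induced bipartite subgraph between any two distinct classes is either empty or complete bipartite $K_{n,n}$. Defining the quotient graph $\Gamma'$ on the vertex set $\{1, \ldots, m\}$ by $i \sim_{\Gamma'} j \iff V_j \subseteq N_i$, we obtain $A = A(\Gamma') \otimes J_n$.

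Finally, I would verify that $\Gamma'$ is a connected $(m, k', \lambda')$-graph with $k' = k/n$ and $\lambda' = \lambda_2/n$ satisfying $1 < k' < m$. Connectedness descends from $\Gamma$; the degree is $k' = k/n$ because $N_i$ is a union of $k'$ full classes of size $n$; and for $u \in V_i$, $v \in V_j$ with $i \neq j$, their $\lambda_2$ common neighbours in $\Gamma$ comprise a union of full classes inside $N_i \cap N_j$, yielding the constant value $\lambda_2 = n \lambda'$ where $\lambda' = |N_{\Gamma'}(i) \cap N_{\Gamma'}(j)|$, so $\Gamma'$ is indeed a $(m, k', \lambda')$-graph. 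The bound $k' < m$ is immediate from $V_i \not\subseteq N_i$, while $k' > 1$ follows from the properness and connectedness hypotheses together with $\lambda_1 \neq \lambda_2$.

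I expect the main obstacle to be Step~1, where we must squeeze the strong structural conclusion ``each class has a single common neighbourhood'' out of the lone equation $\lambda_1 = k$; the key is the observation that $|N(u)\cap N(v)| = k = |N(u)|$ forces set equality, combined with the self-exclusion $v \notin N(v)$ to rule out intra-class edges. Once this clone structure is established, Steps~2 and 3 are mechanical: the complete-bipartite-or-empty dichotomy between classes is an immediate propagation of Step~1, and reading off the parameters of $\Gamma'$ amounts to dividing those of $\Gamma$ by $n$.
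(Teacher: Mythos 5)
First, note that the paper does not actually prove this proposition: it is quoted from \cite[Proposition 4.5]{HKM2011}, so there is no in-paper proof to compare against. Your overall route is the natural (and standard) one, and its main body is sound: the computation for $(2)\Rightarrow(1)$ is correct, and for $(1)\Rightarrow(2)$ the key observation that $|N(u)\cap N(v)|=\lambda_1=k=|N(u)|$ forces $N(u)=N(v)$ for $u,v$ in one canonical class, hence each class is a coclique of clones, each neighbourhood is a union of full classes, and $A=A(\Gamma')\otimes J_n$ with $k'=k/n$ and $\lambda'=\lambda_2/n$, with connectedness and the constant common-neighbour count of $\Gamma'$ descending as you say.

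There is, however, a genuine gap at the very end of Step 3: the claim that $k'>1$ ``follows from properness and connectedness together with $\lambda_1\neq\lambda_2$'' is not proved, and in fact cannot be proved from those hypotheses. Take $\Gamma=K_{n,n}$ with the canonical partition into its two sides: this is a connected proper DDG with parameters $(2n,n,n,0,2,n)$ (so $\lambda_1=k$ and $\lambda_1\neq\lambda_2$), yet your quotient graph is $\Gamma'=K_2$, i.e. $k'=1$, which violates the condition $1<k'<m$ in Construction~\ref{c2} as stated in this paper. So your derivation of $k'>1$ fails exactly in this boundary case, and it must be handled explicitly: if $k'=1$, then connectedness of $\Gamma'$ forces $\Gamma'=K_2$, hence $\Gamma=K_{n,n}$ and $\lambda_2=0$, which is the degenerate graph already arising from Construction~\ref{c1} (Proposition~\ref{p1}); one either excludes this case separately or reads Construction~\ref{c2} with the relaxed bound $1\le k'<m$, under which your argument goes through verbatim. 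Apart from this endpoint issue only minor bookkeeping is missing (e.g. $n>1$, needed in Construction~\ref{c2}, follows from properness; divisibility of $k$ and $\lambda_2$ by $n$ is exactly your ``union of full classes'' observation), so the rest of the proof stands.
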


\begin{construction}[{\cite[Construction 4.6]{HKM2011}}]\label{c3}
Let $A_1, \ldots, A_m$ $(m \geqslant 2)$ be the adjacency matrices of not necessary connected $(n,k',\lambda')$-graphs $($possibly non-isomorphic, but having the same parameters$)$ with $0 \leqslant k' \leqslant n-2$. Then the matrix $J_v - K_{(m,n)} + diag(A_1, \ldots, A_m)$ is the adjacency matrix of a proper DDG with parameters $(mn, k,$ $\lambda_1,$ $\lambda_2, m, n)$, where $k = k' + n(m-1)$, $\lambda_1 = \lambda' + n(m-1)$ and $\lambda_2 = 2k - v$.
\end{construction}

\begin{proposition}[{\cite[Proposition 4.7]{HKM2011}}]\label{p3}
For a proper DDG $\Gamma$ with parameters $(v, k, \lambda_1, \lambda_2, m, n)$, the following statements are equivalent.

{\rm (1)} $\lambda_2 = 2k - v$.

{\rm (2)} $\Gamma$ comes from Construction 3.
\end{proposition}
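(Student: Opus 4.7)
The plan is to prove the two implications separately. The direction $(2) \Rightarrow (1)$ is essentially built into the statement of Construction~\ref{c3}: from the block form
$$
A = J_v - K_{(m,n)} + diag(A_1, \ldots, A_m),
$$
one reads off that every off-diagonal entry equals $1$, and a direct computation of the $(x,y)$-entry of $A^2$ for $x, y$ in different classes gives $\lambda_2 = 2k - v$. This is routine and uses only that the $A_i$ are $(n,k',\lambda')$-graphs.

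For the main direction $(1) \Rightarrow (2)$, the key observation is a short inclusion-exclusion argument. Suppose $x$ and $y$ lie in different classes of the canonical partition of $\Gamma$. Then
$$
|N(x) \cup N(y)| = |N(x)| + |N(y)| - |N(x) \cap N(y)| = 2k - \lambda_2 = v,
$$
so $N(x) \cup N(y) = V(\Gamma)$. Since $x \notin N(x)$ in a simple graph, we must have $x \in N(y)$, i.e.\ $x$ and $y$ are adjacent. Thus every pair of vertices from distinct classes is joined by an edge, and (ordering vertices compatibly with the canonical partition) every off-diagonal block of the adjacency matrix $A$ is the all-ones block. Equivalently,
$$
A = J_v - K_{(m,n)} + diag(A_1, \ldots, A_m)
$$
for some symmetric $0/1$ matrices $A_i$ with zero diagonal, where $A_i$ is the adjacency matrix of the subgraph of $\Gamma$ induced on $V_i$.

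It then remains to identify each $A_i$ as a $(n,k',\lambda')$-graph with parameters independent of $i$. For any $x \in V_i$, exactly $n(m-1)$ of its $k$ neighbours lie outside $V_i$, so it has $k' := k - n(m-1)$ neighbours inside $V_i$; this is independent of $x$ and $i$. Likewise, for any two distinct $x, z \in V_i$, exactly $n(m-1)$ of their $\lambda_1$ common neighbours lie outside $V_i$, so they share $\lambda' := \lambda_1 - n(m-1)$ common neighbours inside $V_i$. Hence each $A_i$ is an $(n,k',\lambda')$-graph. Finally, the properness of $\Gamma$ forces $k' \leqslant n-2$: if $k' = n-1$, then each $A_i = J_n - I_n$, whence $A = J_v - I_v$ and $\Gamma = K_v$, which is improper ($\lambda_1 = \lambda_2$).

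The only step that needs any real thought is the inclusion-exclusion observation, and specifically the extraction from $N(x) \cup N(y) = V(\Gamma)$ of the fact that $x$ itself lies in $N(y)$ --- once this forces the between-class structure to be complete multipartite, the rest is bookkeeping that lines the adjacency matrix up with the form in Construction~\ref{c3}.
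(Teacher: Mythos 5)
Your proof is correct. Note that the paper does not prove Proposition~\ref{p3} at all --- it is quoted from \cite{HKM2011} --- and your argument (inclusion--exclusion giving $|N(x)\cup N(y)|=2k-\lambda_2=v$ for vertices $x,y$ in different classes, hence adjacency of all between-class pairs, after which each class induces an $(n,k',\lambda')$-graph with $k'=k-n(m-1)$, $\lambda'=\lambda_1-n(m-1)$, and $k'\leqslant n-2$ forced by properness) is essentially the standard proof given in that original source, so there is nothing to correct.
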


The \emph{lexicographic product} or \emph{graph composition} $G[H]$ of graphs $G$ and $H$ is the graph with vertex set $V(G) \times V(H)$ and adjacency defined by
$$(u_1, v_1) \sim (u_2, v_2) \text{ if and only if } u_1 \sim u_2 \text{, or } u_1 = u_2 \text{ and } v_1 \sim v_2.$$

\begin{construction}[{\cite[Construction 4.10]{HKM2011}}]\label{c4}
Let $G$ be a strongly regular graph with parameters $(v',k',\lambda, \lambda + 1)$. Then $G[K_2]$ is a DDG with parameters $(2v', 2k' + 1,$ $2k', 2\lambda+2, v', 2)$.
\end{construction}

An automorphism of order 2 of a graph is called a \emph{Seidel automorphism} if it interchanges only non-adjacent vertices. Permuting the rows (and not the columns) of the adjacency matrix of a graph according to Seidel automorphism is called \emph{dual Seidel switching} (DSS for short).

\begin{construction}[{\cite[Construction 2]{GHKS2019}}]\label{c5}
Let $\Gamma$ be a DDG obtained with Construction~\ref{c4}. Let $A$ be the adjacency matrix of $\Gamma$, and $P$ be a non-identity permutation matrix of the same size. If $P$ represents a Seidel automorphism, then $PA$ is the adjacency matrix of a DDG with the same parameters as $\Gamma$.
\end{construction}

\begin{proposition}[{\cite[Theorem 2]{GHKS2019}}]\label{p4}
For a proper DDG $\Gamma$ with parameters $(v, k, \lambda_1, \lambda_2,$ $m, n)$, where $\lambda_2 \not\in \{0, 2k - v\}$, the following statements are equivalent.

{\rm (1)} $\lambda_1 = k - 1$.

{\rm (2)} $\Gamma$ comes from Construction 4 or 5.
\end{proposition}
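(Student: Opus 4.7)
The direction $(2)\Rightarrow(1)$ is a direct parameter check: Construction~\ref{c4} yields $(v,k,\lambda_1,\lambda_2,m,n)=(2v',2k'+1,2k',2\lambda+2,v',2)$, so $\lambda_1=2k'=k-1$, and Construction~\ref{c5} preserves parameters by design. Checking that the underlying SRG can be chosen so that $\lambda_2=2\lambda+2\notin\{0,2k-v\}$ is routine.

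For $(1)\Rightarrow(2)$ I would start from a local analysis. Since $\lambda_1=k-1$, any two distinct vertices $x,y$ in a common class $V_i$ satisfy $|N(x)\triangle N(y)|=2$. A quick count splits this into two exclusive cases: if $x\sim y$ then the two symmetric-difference vertices must be $x$ and $y$ themselves, so $N(x)\setminus\{y\}=N(y)\setminus\{x\}$; if $x\not\sim y$, both symmetric-difference vertices lie outside $\{x,y\}\cup V_i$ (the last inclusion uses that the in-class degree is constant). The goal is then to leverage this ``almost-twin'' structure to force $n=2$ and to identify $\Gamma$ as $G[K_2]$ up to dual Seidel switching.

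Forcing $n=2$ is the heart of the argument. In the regime where every same-class pair is adjacent, each $V_i$ is a clique with a common external neighbourhood, and globally $\Gamma$ is a lexicographic product $H[K_n]$ for some regular graph $H$ on $m$ vertices. Computing $\lambda_2$ in two ways -- for a pair of classes joined in $H$ and for a non-joined pair -- yields $2(n-1)+n\lambda_H=n\mu_H$, so $\mu_H-\lambda_H=2-2/n$; integrality forces $n=2$. The ``all non-adjacent'' regime is more delicate: by counting contributions to $|N(x)\triangle N(y)|$ over the classes one shows that the symmetric difference is concentrated in a single pivot class $V_{j^*}$, and consistency over triples of vertices in $V_i$ (combined with the equitable partition of Lemma~\ref{l3} and the eigenvalue restrictions of Lemmas~\ref{l1}--\ref{l2}) again collapses the analysis to $n=2$. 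A class mixing both local types is ruled out from the equitable-partition condition on the in-class degree $r_{ii}$, and the hypothesis $\lambda_2\notin\{0,2k-v\}$ excludes the alternative Constructions~\ref{c1} and~\ref{c3}.

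With $n=2$, each class is a pair $\{x,x'\}$. If every class-pair is adjacent, the almost-twin property identifies $\Gamma$ with $G[K_2]$ for the graph $G$ on $m$ vertices whose edges record which class pairs are completely joined in $\Gamma$; the DDG condition forces $G$ to be an SRG with parameters $(m,k',\lambda,\lambda+1)$, which is exactly Construction~\ref{c4}. Otherwise let $T$ be the set of classes whose pair is non-adjacent, and let $\sigma$ be the involution swapping the two vertices within each class of $T$. I would build an auxiliary graph $\Gamma'$ from Construction~\ref{c4} that agrees with $\Gamma$ outside the pair-edges of $T$, and verify that $\sigma$ acts on $\Gamma'$ as a Seidel automorphism with $PA'=A$, exhibiting $\Gamma$ as Construction~\ref{c5}. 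The main obstacle is the ``all non-adjacent'' sub-case in the step forcing $n=2$, where the combinatorial data is weakest; the DSS reconstruction in the remaining sub-case is more bookkeeping than idea, but it too leans on $\lambda_2\notin\{0,2k-v\}$ to exclude unwanted alternatives.
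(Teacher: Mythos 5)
First, a point of comparison: the paper does not prove Proposition~\ref{p4} at all --- it is quoted from \cite{GHKS2019} (Theorem~2 there), so there is no internal proof to measure your plan against; what you are sketching is the content of that external theorem, and it has to stand on its own.

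On its own terms, your direction $(2)\Rightarrow(1)$ is fine (and you do not even need to verify $\lambda_2\notin\{0,2k-v\}$ there, since that is a hypothesis of the equivalence, not a conclusion). The gap is in $(1)\Rightarrow(2)$, precisely at what you call the heart of the argument, forcing $n=2$. Your split into a regime where \emph{every} same-class pair is adjacent and a regime where all such pairs are non-adjacent is not exhaustive: a priori some classes could be cliques and others cocliques while $n>2$, and indeed the outputs of Construction~\ref{c5} themselves have classes of both kinds, so no global regime dichotomy can carry the argument; the $\lambda_2$-count that yields $\mu_H-\lambda_H=2-2/n$ uses all classes being cliques and says nothing about such mixed graphs. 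Your exclusion of a class mixing both local types via the equitable-partition constant $r_{ii}$ (Lemma~\ref{l3}) is also unjustified as stated: the local ``almost-twin'' relation only shows that a class induces a disjoint union of cliques of size $r_{ii}+1$, and constant in-class degree is perfectly compatible with, say, $n=4$ and $r_{ii}=1$ (a class inducing $2K_2$); ruling this out needs a genuinely global count, not the quotient matrix alone. Finally, the all-coclique case --- the one you flag as the main obstacle --- is exactly where the cited reference does its work (it is the $\beta=1$ hypothesis in \cite{GHKS2019}), and ``the symmetric difference is concentrated in a single pivot class'' plus an appeal to Lemmas~\ref{l1}--\ref{l3} is a direction, not an argument: with a coclique class of size $\geqslant 3$ the neighbourhoods can pairwise differ in one vertex in more than one combinatorial pattern, and nothing you wrote eliminates them. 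The concluding reconstruction of a Seidel automorphism with $PA'=A$ is the right picture of Construction~\ref{c5}, but ``verify'' hides the check that the auxiliary graph is $G[K_2]$ for an SRG with $\mu=\lambda+1$ and that the involution swaps only non-adjacent vertices. As it stands, the proposal is a reasonable outline whose decisive steps (forcing $n=2$ and excluding mixed configurations) are missing.
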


An $m \times m$ matrix $H$ is a \emph{Hadamard matrix} if every entry is $1$ or $-1$ and $HH^\top$~=~$mI_m$. A Hadamard matrix $H$ is called \emph{graphical} if $H$ is symmetric with constant diagonal, and \emph{regular} if all row and column sums are equal.

\begin{construction}[{\cite[Construction 4.9]{HKM2011}}]\label{c6}
Consider a regular graphical Hadamard matrix $H$ of order $l^2 \geqslant 4$ with diagonal entries $-1$ and row sum $l$. The graph with adjacency matrix\\
$A = \begin{bmatrix}
M & N & O\\
N & O & M\\
O & M & N
\end{bmatrix},$
where\\
$M = \displaystyle \frac{1}{2} \begin{bmatrix}
J_{l^2} + H & J_{l^2} + H\\
J_{l^2} + H & J_{l^2} + H
\end{bmatrix},\,
N = \displaystyle \frac{1}{2} \begin{bmatrix}
J_{l^2} + H & J_{l^2} - H\\
J_{l^2} - H & J_{l^2} + H
\end{bmatrix}, \text{ and } O = O_{2l_2}$\\
is a DDG with parameters $(6l^2$, $2l^2 + l$, $l^2 + l$, $(l^2 + l)/2$, $3$, $2l^2)$.
\end{construction}

\subsection{Vertex connectivity}\label{sect2_2}

The \emph{vertex connectivity} $\varkappa(G)$ of a graph $G$ is the minimum number of vertices whose deletion from $G$ disconnects it. Note that for a $k$-regular graph $G$ the inequality $\varkappa(G) \leqslant k$ holds. Let $x$ and $y$ be two vertices in a graph $G$. Two simple paths connecting $x$ and $y$ are called \emph{disjoint} if they have no common vertices different from $x$ and $y$. A set of vertices $S$ \emph{disconnects} $x$ and $y$ if $x$ and $y$ belong to different connected components of the graph obtained from $G$ by deleting $S$. A set $S$ of vertices of a graph $G$ is called \emph{disconnecting} if it disconnects some two of its vertices. The following lemma is known as Menger's theorem.

\begin{lemma}[{\cite[Theorem 5.9]{H1969}}]\label{Menger}
The minimum cardinality of a set disconnecting non-adjacent vertices $x$ and $y$ is equal to the largest number of disjoint paths connecting these vertices.
\end{lemma}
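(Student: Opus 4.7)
The plan is to establish both inequalities. One direction is immediate: if $P_1, \ldots, P_\ell$ are pairwise internally disjoint $x$-$y$ paths in $G$, then any set $S$ that disconnects $x$ from $y$ must meet the interior of each $P_i$, and because those interiors are pairwise disjoint the intersections give $\ell$ distinct vertices of $S$, so $|S| \geqslant \ell$. Maximising $\ell$ shows that the minimum cardinality of a disconnecting set is at least the largest number of internally disjoint paths.

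For the reverse inequality I would reduce to the max-flow min-cut theorem via the standard vertex-splitting construction. Build a directed network whose nodes are $x$, $y$, together with two copies $v^-, v^+$ of every other vertex $v$; include a directed arc $v^- \to v^+$ of capacity $1$ for each interior $v$, and, for each edge $uv$ of $G$, the two arcs $u^+ \to v^-$ and $v^+ \to u^-$ of infinite capacity (edges incident to $x$ or $y$ are handled in the obvious way, with $x$ as source and $y$ as sink). An integral maximum flow exists and decomposes into a family of $x$-$y$ paths in the network; pulling these paths back to $G$ yields $x$-$y$ paths, and the unit capacity on each arc $v^- \to v^+$ guarantees that no interior vertex of $G$ is traversed by two of them. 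Conversely, a finite cut in the network can contain no infinite-capacity arc, so it consists of arcs of the form $v^- \to v^+$, and the corresponding vertices $v$ form a disconnecting set in $G$ of the same size as the cut. Applying max-flow min-cut yields the required equality.

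The main technical point I expect to handle carefully is verifying that the correspondence between finite cuts in the network and disconnecting sets in $G$ preserves cardinalities in both directions, and that the flow-decomposition step produces paths that are internally disjoint rather than merely edge-disjoint; both facts follow from the unit-capacity bottleneck at each interior vertex. An alternative route would be Dirac's direct induction on $|V(G)|$, splitting into cases according to whether $x$ and $y$ have a common neighbour and whether every minimum cut coincides with $N(x)$ or $N(y)$; this is self-contained but the case analysis is more delicate, so I would prefer the flow-based argument for brevity.
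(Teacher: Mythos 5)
Your proposal is a correct proof of Menger's theorem, but note that the paper does not prove this statement at all: it is quoted as Lemma~\ref{Menger} with a citation to Harary's textbook and used as a black box, so there is no ``paper's proof'' to match. Your easy direction is fine --- since $x$ and $y$ are non-adjacent and a disconnecting set contains neither of them, every $x$--$y$ path must be hit in an internal vertex, and internal disjointness forces distinct vertices of $S$ for distinct paths. The reverse direction via the vertex-splitting network and max-flow min-cut is the standard derivation and works as you describe; the only points to state explicitly are that the maximum flow is finite (it is bounded by the number of split arcs, so a minimum cut indeed avoids the infinite-capacity arcs --- or simply give those arcs capacity $|V(G)|$), that an integral maximum flow exists by integrality of max-flow with integer capacities, and that in the flow decomposition the unit capacity of each arc $v^-\to v^+$ is exactly what converts edge-disjointness in the network into internal vertex-disjointness in $G$, as you note. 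Harary's own treatment (and Dirac's) is the self-contained inductive argument you mention as an alternative; the flow-based route is shorter given max-flow min-cut, while the inductive route avoids invoking network flows at the cost of the case analysis. Either is acceptable here, since the paper only needs the statement itself.
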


\begin{lemma}[{\cite[Theorem 1]{XY2013}}]\label{lexico}
Let $G_1$ and $G_2$ be two graphs. If $G_1$ is non-complete and connected, then $\varkappa(G1[G2]) = \varkappa(G1) \cdot |V(G2)|$.
\end{lemma}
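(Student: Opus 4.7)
The plan is to establish the equality by bounding $\varkappa(G_1[G_2])$ from above and below by $\varkappa(G_1) \cdot |V(G_2)|$. Throughout, set $\Gamma := G_1[G_2]$, $\kappa_1 := \varkappa(G_1)$, and $n_2 := |V(G_2)|$.

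For the upper bound $\varkappa(\Gamma) \leqslant \kappa_1 n_2$, I would take a minimum vertex cut $S$ of $G_1$ and argue that $T := S \times V(G_2)$ disconnects $\Gamma$. Indeed, $\Gamma - T \cong (G_1 - S)[G_2]$, and because $G_1 - S$ is disconnected, ``columns'' $\{u\} \times V(G_2)$ lying over distinct components of $G_1 - S$ are non-adjacent in $(G_1 - S)[G_2]$. Thus $|T| = \kappa_1 n_2$ suffices to disconnect $\Gamma$.

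For the lower bound $\varkappa(\Gamma) \geqslant \kappa_1 n_2$, let $T$ be an arbitrary vertex cut of $\Gamma$ and set
$$P_T := \{u \in V(G_1) \colon \{u\} \times V(G_2) \subseteq T\},$$
so that $|T| \geqslant |P_T| \cdot n_2$. Suppose for contradiction that $|P_T| < \kappa_1$; the strategy is to show that $\Gamma - T$ is connected, contradicting the hypothesis that $T$ is a cut. Since $|P_T| < \kappa_1$, the set $P_T$ is not a cut of $G_1$; moreover, because $G_1$ is non-complete we have $\kappa_1 \leqslant |V(G_1)| - 2$, so $V(G_1) \setminus P_T$ has at least two elements, and hence $G_1 - P_T$ is connected with minimum degree at least $1$. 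Given two vertices $(u_1, v_1), (u_2, v_2) \in \Gamma - T$ (necessarily with $u_1, u_2 \notin P_T$), if $u_1 \neq u_2$ I would take a path $u_1 = w_0, w_1, \ldots, w_s = u_2$ in $G_1 - P_T$ and, for each interior $w_i$, choose some $\tilde v_i$ with $(w_i, \tilde v_i) \notin T$ (which exists by the definition of $P_T$); consecutive pairs $(w_i, \tilde v_i), (w_{i+1}, \tilde v_{i+1})$ are then adjacent in $\Gamma$ because $w_i \sim w_{i+1}$ in $G_1$, yielding a path in $\Gamma - T$. If $u_1 = u_2 = u$, I use that $u$ has a neighbour $u' \in V(G_1) \setminus P_T$ in $G_1 - P_T$ and pick any $(u', v') \notin T$ as a common $\Gamma$-neighbour of $(u, v_1)$ and $(u, v_2)$.

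The main obstacle is identifying the correct way to ``project'' the cut $T$ down to $G_1$: the naive projection $\{u : (\{u\} \times V(G_2)) \cap T \neq \emptyset\}$ can be much larger than $\kappa_1$, whereas the saturated-column set $P_T$ is exactly what gives the crucial inequality $|T| \geqslant |P_T| \cdot n_2$. The role of non-completeness is subtle and enters precisely to guarantee $|V(G_1) \setminus P_T| \geqslant 2$, so that $G_1 - P_T$ is a nontrivial connected graph whose edges drive the path construction in $\Gamma - T$.
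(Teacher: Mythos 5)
Your proof is correct. Note, however, that the paper does not prove this statement at all: Lemma~\ref{lexico} is quoted from Xu and Yang \cite{XY2013} as a known preliminary result, so there is no in-paper argument to compare yours against; what you have written is a self-contained elementary proof of the cited theorem. Both directions check out: the upper bound via $T = S \times V(G_2)$ for a minimum cut $S$ of $G_1$ is immediate since $\Gamma - T \cong (G_1-S)[G_2]$ is disconnected, and your lower bound correctly isolates the key idea, namely projecting a cut $T$ of $\Gamma$ to the set $P_T$ of \emph{fully saturated} columns (rather than the naive projection), which gives $|T| \geqslant |P_T|\cdot|V(G_2)|$, and then rebuilding connectivity of $\Gamma - T$ from a path in $G_1 - P_T$ when $|P_T| < \varkappa(G_1)$; the adjacency rule of the lexicographic product (adjacent first coordinates force adjacency regardless of second coordinates) makes the lifted walk valid, and non-completeness of $G_1$ is used exactly where you say, to ensure $\varkappa(G_1) \leqslant |V(G_1)|-2$ so that $G_1 - P_T$ has at least two vertices (covering the case $u_1 = u_2$ via a neighbouring non-saturated column). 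One point worth making explicit if you write this up: $G_1[G_2]$ is itself non-complete (a non-adjacent pair of $G_1$ lifts to non-adjacent vertices), so its connectivity is indeed attained by a vertex cut, which is what your contradiction argument quantifies over.
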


Below we present known results on the vertex connectivity of Deza graphs.

\begin{lemma}[{\cite[Theorem]{GGK2014}}]\label{dezacon}
Let~$\Gamma$ be a Deza graph obtained from a strongly regular graph~$G$ with non-principal eigenvalues $r$ and $s$ by dual Seidel switching. Then one of the following three cases holds.

{\rm (1)} If $r>2$ and $s<-2$, then the vertex connectivity of~$\Gamma$ is equal to its valency, and a disconnecting set of minimum cardinality is the neighbourhood of some vertex.

{\rm (2)} $s=-2$ and the vertex connectivity of~$\Gamma$ is equal to its valency except for the case when~$G$ is a $3 \times 3$-lattice graph and the vertex connectivity of~$\Gamma$ is~$3$.

{\rm (3)} $r\leq 2$.
\end{lemma}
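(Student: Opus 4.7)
The plan is to use spectral interlacing in the spirit of the Brouwer--Mesner proof for strongly regular graphs. A key preliminary is that dual Seidel switching by a permutation matrix $P$ representing an involutory automorphism of $G$ preserves $k$-regularity and makes $P$ commute with the adjacency matrix $A$ of $G$; since $P^2 = I$, both matrices are real symmetric and simultaneously diagonalisable, so the eigenvalues of $PA$ lie in $\{k,\pm r,\pm s\}$. Moreover $k$ occurs with the all-ones eigenvector, so the non-principal eigenvalues of $\Gamma$ are bounded in absolute value by $\max(|r|,|s|)$, and Haemers' interlacing machinery is available on $\Gamma$.

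For case~(1), assume $r > 2$ and $s < -2$, and suppose for contradiction that $\Gamma$ admits a disconnecting set $S$ with $|S| < k$. Split $V(\Gamma) \setminus S = C_1 \cup C_2$ so that no edge joins $C_1$ and $C_2$. Applying Haemers' interlacing to the $3 \times 3$ quotient matrix of $(C_1, S, C_2)$ produces three real eigenvalues sandwiched inside the spectrum of $\Gamma$. Because $\Gamma$ is $k$-regular and $|S| < k$, the row sums of the quotient can be estimated tightly, and a computation in the style of Brouwer--Mesner forces one of these eigenvalues outside the interval $[\min(s,-r),\,\max(r,-s)]$, contradicting the spectral bound. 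Hence $\varkappa(\Gamma) \ge k$, and since the neighbourhood of any vertex is a disconnecting set of size $k$, equality holds with this set as a witness.

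For case~(2), where $s = -2$, I would appeal to the classification of strongly regular graphs with least eigenvalue $-2$ \cite[Theorem 3.12.4]{BCN1989}: up to complementation, $G$ is a triangular graph $T(n)$, a lattice graph $L(m,n)$, a complete multipartite graph, or one of the sporadic Petersen, Shrikhande, Clebsch, Schl\"afli or Chang graphs. For each family I would enumerate the Seidel automorphisms and verify that $\varkappa(\Gamma) = k$ survives dual Seidel switching. The sole exception is the $3 \times 3$-lattice $L(K_{3,3})$: the three vertices in any row of the underlying grid form a $3$-vertex cut, and a short direct verification rules out any smaller cut, giving $\varkappa(\Gamma) = 3 < k = 4$.

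Case~(3) states only the numerical condition $r \le 2$ and draws no further conclusion, so nothing is to prove. The main obstacle I foresee is case~(2): the classification-based case analysis is unavoidable once $s = -2$, and confirming that $L(K_{3,3})$ is the only exception requires careful tracking of which Seidel automorphisms each family actually admits. This is precisely the regime where the interlacing bound from case~(1) becomes tight, which explains why a single exception appears right at the boundary.
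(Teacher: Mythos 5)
First, note that the paper does not prove this statement at all: Lemma~\ref{dezacon} is quoted verbatim from \cite{GGK2014}, so your attempt has to be measured against the cited proof, whose key step this paper does reproduce as Lemma~\ref{l11.0}. Your overall roadmap (spectrum of $PA$ contained in $\{k,\pm r,\pm s\}$, a Brouwer--Mesner-type spectral argument for case (1), the $s=-2$ classification for case (2)) is indeed the strategy of \cite{GGK2014}, but the decisive content of case (1) is missing. Saying that ``a computation in the style of Brouwer--Mesner forces one of these eigenvalues outside the interval'' is a restatement of the goal, not an argument, and the $3\times 3$ quotient-matrix interlacing you invoke is not what does the work. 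The actual engine is: if a minimum disconnecting set $S$ is \emph{not} the neighbourhood of a vertex, interlacing applied to the disjoint union of the two remaining components shows one component $B$ has mean degree at most the second eigenvalue $q$, whence some pair $x,y\in B$ satisfies $|S(x)\cap S(y)|\ge k-2q$, giving $k-2q\le b$ (this is Lemma~\ref{l11.0}). One must then verify, using $\mu=k+rs$, $\lambda=\mu+r+s$, $q\le\max(r,-s)$ and $b=\max(\lambda,\mu)$, that $r>2$ and $s<-2$ force $k-2q>b$, a contradiction; this computation is exactly what you omit, and it is also what yields the additional conclusion in (1) that \emph{every} minimum disconnecting set is a vertex neighbourhood, not merely that $\varkappa(\Gamma)=k$.

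Case (2) as written is a plan rather than a proof: ``enumerate the Seidel automorphisms and verify'' requires genuine arguments for the infinite families (triangular, lattice, complete multipartite), and your one concrete claim is false. For the Deza graph obtained from the $3\times 3$-lattice by dual Seidel switching (with respect to the transposition automorphism, which is essentially its only Seidel automorphism), deleting the three vertices of a row (or a column) of the grid leaves a connected graph on the remaining six vertices, so rows are not $3$-cuts; the minimum cut realising $\varkappa(\Gamma)=3$ must be exhibited differently. Also, the classification of strongly regular graphs with least eigenvalue $-2$ is not ``up to complementation''. In short: the approach is the right one and coincides with that of \cite{GGK2014}, but both pivotal verifications --- the inequality $k-2q>b$ in case (1) and the case analysis together with the exceptional cut in case (2) --- are missing or incorrect, so the proposal does not yet constitute a proof.
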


\begin{lemma}[{\cite[Theorems 1-2]{GP2019}}]\label{dezacon2}
Let~$\Gamma$ be a $k$-regular Deza graph obtained from a strongly regular graph~$G$ with $r = 1$  by dual Seidel switching. Then the vertex connectivity of~$\Gamma$ is equal to its valency except for the case when~$G$ is the complement of $n \times n$-lattice graph $($in this case the vertex connectivity of~$\Gamma$ equals $k-1)$.
\end{lemma}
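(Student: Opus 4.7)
The plan is to adapt the strategy used for Lemma~\ref{dezacon}: translate $r=1$ into restrictions on the parameters of $G$, and then use the local structure of dual Seidel switching together with Menger's theorem (Lemma~\ref{Menger}) to build enough disjoint paths between pairs of non-adjacent vertices in $\Gamma$.

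First I would apply Lemma~\ref{teosrg}. With $r=1$ the quadratic $x^2+(\mu-\lambda)x+(\mu-k)=0$ forces $s=\mu-k$ and $\lambda=2\mu-k+1$, so the feasible parameters of $G$ are determined by the pair $(k,\mu)$. In the exceptional case when $G$ is the complement of the $n\times n$-lattice graph one has parameters $(n^2,(n-1)^2,(n-2)^2,(n-1)(n-2))$ with $s=1-n$, and the rows of the underlying $n\times n$-grid form cocliques of $G$ --- this is the feature that I expect to drive the drop to $k-1$. Also, if $\sigma$ is a Seidel automorphism of $G$ with permutation matrix $P$ and $A$ is the adjacency matrix of $G$, then a vertex $x$ is adjacent in $\Gamma=PA$ to $y$ if and only if $\sigma(x)$ is adjacent in $G$ to $y$, so $\sigma$ can only interchange pairs of vertices lying inside cocliques of $G$.

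For the exceptional case I would pick an explicit Seidel automorphism swapping two row-mates $x,x'$ of the grid (they are non-adjacent in the complement of the lattice) and verify that $N_\Gamma(x')\subseteq N_\Gamma(x)\cup\{x\}$; this produces a disconnecting set of size $k-1$. The matching lower bound $\varkappa(\Gamma)\geqslant k-1$ then follows from Menger's theorem applied to routes through the common-neighbour structure, which takes only the two values $b,a$ by definition of a Deza graph. For the generic case (any SRG with $r=1$ that is not isomorphic to the complement of the lattice), I would prove $\varkappa(\Gamma)=k$ by exhibiting, for each pair of non-adjacent vertices in $\Gamma$, $k$ internally disjoint paths: most of them are paths of length two through common neighbours, and the remaining few are routed through non-common neighbours, using the fact that no row-type coclique phenomenon occurs.

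The main obstacle will be the generic case, because the family of SRGs with $r=1$ is infinite and the gap $|s|=k-\mu$ can be as small as in the exceptional example, so a uniform Menger construction needs careful bookkeeping to rule out accidental coincidences of $\Gamma$-neighbourhoods after switching. A cleaner alternative would be to prove that every disconnecting set of size strictly less than $k$ in $\Gamma$ must have the form $N_\Gamma(x)\setminus\{y\}$ for a pair of non-adjacent vertices satisfying $N_\Gamma(y)\subseteq N_\Gamma(x)\cup\{x\}$, and then to translate the existence of such a pair into a combinatorial condition on Seidel automorphisms of $G$ which I expect to single out the lattice-complement case precisely.
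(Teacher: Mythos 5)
There is a genuine flaw in the exceptional case, which is the heart of the statement. (Note also that this lemma is not proved in the paper at all --- it is quoted from \cite{GP2019} --- so your plan can only be compared with that external proof.) You propose to obtain $\varkappa(\Gamma)\leqslant k-1$ by exhibiting switched row-mates $x,x'$ with $N_\Gamma(x')\subseteq N_\Gamma(x)\cup\{x\}$. No such pair can exist. Since a Seidel automorphism is in particular an automorphism, $P$ commutes with $A$ and $P^2=I$, so $(PA)^2=A^2$; hence any two distinct vertices of $\Gamma=PA$ have exactly $\lambda$ or $\mu$ common neighbours, where for the complement of the $n\times n$-lattice $\lambda=(n-2)^2$ and $\mu=(n-1)(n-2)$, both strictly smaller than $k-1=n(n-2)$ when $n\geqslant 3$. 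But your containment would force $k-1$ common neighbours if $x\sim x'$ and $k$ common neighbours if $x\not\sim x'$, which is impossible. Concretely, if $\sigma$ swaps the columns $c_1,c_2$, then for the swapped pair $x=(i,c_1)$, $x'=(i,c_2)$ one has $N_\Gamma(x)=N_G(x')$ and $N_\Gamma(x')=N_G(x)$, and these share only $\mu$ vertices. The ``cleaner alternative'' in your last paragraph rests on the same false premise, namely that a disconnecting set of size less than $k$ must have the form $N_\Gamma(x)\setminus\{y\}$.

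The actual drop to $k-1$ has a different shape: if the Seidel automorphism contains the column transposition $(c_1\,c_2)$, then in $\Gamma$ each of the two switched columns induces an $n$-clique, there are no $\Gamma$-edges between these two columns, and the $\Gamma$-neighbourhood of either clique is precisely the $n(n-2)=k-1$ vertices outside both columns; deleting that set disconnects an $n$-clique from the rest. So the minimum disconnecting set is the neighbourhood of an $n$-clique, not of a vertex or an edge, which is exactly why the mechanism based on pairs with $k-1$ common neighbours (the one that works in Propositions~\ref{p2k_v} and~\ref{pk_1}) is unavailable here. Your treatment of the remaining cases is also only schematic: asserting ``$k$ internally disjoint paths, most through common neighbours'' restates Menger's theorem (Lemma~\ref{Menger}) without an argument, whereas the cited proof requires both the matching lower bound $\varkappa(\Gamma)\geqslant k-1$ in the lattice-complement case and a case analysis over the complements of Seidel graphs (complements of triangular graphs, of cocktail-party graphs, and of the sporadic Seidel graphs) that admit Seidel automorphisms. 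The preliminary reductions you make (the values of $s$ and $\lambda$ when $r=1$, the parameters of the lattice complement, and the adjacency rule $x\sim_\Gamma y \Leftrightarrow \sigma(x)\sim_G y$) are correct, but the key verification step fails, so the proposal does not establish the lemma.
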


\section{The vertex connectivity of special classes of DDGs}\label{sect3}

\subsection{DDGs with $\lambda_1 \in \{k-1, k\}$ or $\lambda_2 \in \{0, 2k-v\}$}\label{sect3.1}

\begin{proposition}\label{p0}
The vertex connectivity of a connected DDG $\Gamma$ with parameters $(v, k, \lambda_1, \lambda_2,$ $m, n)$, where $\lambda_2 = 0$, equals $k$.
\end{proposition}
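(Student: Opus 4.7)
My plan is to reduce the statement to a known result via Proposition~\ref{p1}. Since $\Gamma$ is a connected DDG with $\lambda_2 = 0$, that proposition identifies $\Gamma$ as the incidence graph of a symmetric $2$-$(n, k, \lambda_1)$-design with $1 < k \leqslant n$, i.e., a bipartite $k$-regular graph on $2n$ vertices whose parts are indexed by the points and the blocks of the design.

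The next step is to verify that $\Gamma$ is bipartite distance-regular of diameter $3$ with intersection array $\{k,\,k-1,\,k-\lambda_1;\,1,\,\lambda_1,\,k\}$. All $a_i$ vanish by bipartiteness. Two vertices in the same part of the bipartition lie at distance $2$ and have exactly $\lambda_1$ common neighbours---this is just the symmetric design axiom that any two distinct points (respectively blocks) occur together in exactly $\lambda_1$ blocks (respectively points). A non-incident point-block pair $(p, B)$ is at distance $3$, and since $k > 1$ forces $\lambda_1 \geqslant 1$ via $\lambda_1(n-1) = k(k-1)$, every point of $B$ shares a block with $p$ and therefore sits at distance $2$ from $p$, giving $c_3 = k$. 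Hence $\Gamma$ is distance-regular.

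With distance-regularity in hand, the conclusion $\varkappa(\Gamma) = k$ is immediate from the Brouwer--Koolen theorem \cite{BK2009} quoted in the introduction, which asserts that the vertex connectivity of a distance-regular graph equals its valency; the matching upper bound $\varkappa(\Gamma) \leqslant k$ is automatic from $k$-regularity.

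I do not expect any serious obstacle. Both ingredients---the structural characterisation (Proposition~\ref{p1}) and the connectivity theorem of Brouwer--Koolen---are in the literature, and the only thing one actually has to check is that symmetric design incidence graphs are distance-regular, which is a short direct calculation from the design axioms. Should the referee prefer a self-contained proof that avoids quoting \cite{BK2009}, the alternative I would fall back on is a direct Menger-style construction of $k$ internally disjoint paths between any two non-adjacent vertices of $\Gamma$: for a same-part pair one uses $\lambda_1$ paths of length $2$ through the common blocks plus $k - \lambda_1$ paths of length $4$ obtained by a Hall-type matching between the blocks through each endpoint (which pair up via their $\lambda_1$-point intersections), and an analogous argument handles a non-incident point-block pair. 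This is more laborious but uses nothing beyond the design axioms and Lemma~\ref{Menger}.
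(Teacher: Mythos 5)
Your proposal is correct and follows essentially the same route as the paper: reduce via Proposition~\ref{p1} to the incidence graph of a symmetric $2$-$(n,k,\lambda_1)$-design and conclude by the Brouwer--Koolen theorem \cite{BK2009}, the only difference being that you verify distance-regularity directly while the paper cites \cite[Theorem 1.6.1]{BCN1989}. The one small point to adjust is the degenerate case $k = n$ allowed by Construction~\ref{c1}: there $\Gamma = K_{n,n}$ has diameter $2$, not $3$, so your claimed intersection array does not apply; the paper treats this case separately, and your argument still goes through since $K_{n,n}$ is distance-regular (or directly $\varkappa(K_{n,n}) = n = k$).
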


\begin{proof}
By Proposition~\ref{p1}, $\Gamma$ is the incidence graph of a symmetric $2$-$(n,k,\lambda_1)$-design.
Such graphs are distance-regular graphs with diameter $3$ (see \cite[Theorem 1.6.1]{BCN1989}), except for the case when $n = k$ (in this case, $\Gamma$ is the complete bipartite graph $K_{n,n}$ and $\varkappa(\Gamma) = n = k$). Since the vertex connectivity of distance-regular graphs equals~$k$ (see \cite{BK2009}), the statement of the proposition is true.
$\square$ \end{proof}

\begin{proposition}\label{pk}
The vertex connectivity of a connected DDG $\Gamma$ with parameters $(v, k, \lambda_1, \lambda_2,$ $m, n)$, where $\lambda_1 = k$, equals $k$.
\end{proposition}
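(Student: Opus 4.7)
The plan is to use Proposition~\ref{p2} to realise $\Gamma$ as a lexicographic product, and then combine Lemma~\ref{lexico} with the Brouwer--Mesner theorem on the vertex connectivity of strongly regular graphs.

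First, I would invoke Proposition~\ref{p2}: the adjacency matrix of $\Gamma$ has the form $A' \otimes J_n$, where $A'$ is the adjacency matrix of a connected $(m,k',\lambda')$-graph $G'$ with $1 < k' < m$, and $k = nk'$. Since $A' \otimes J_n$ is precisely the adjacency matrix of the lexicographic product $G'[\overline{K_n}]$ (within each diagonal block one has $O_n$, between the $i$-th and $j$-th blocks one has $J_n$ or $O_n$ according as $A'_{ij}$ is $1$ or $0$), this gives $\Gamma \cong G'[\overline{K_n}]$.

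Next I would split on whether $G'$ is complete. If $k' = m-1$, then $G' = K_m$ and $\Gamma$ is the complete multipartite graph with $m$ parts of size $n$; any two non-adjacent vertices lie in the same part and admit $n(m-1)$ internally disjoint paths of length two through the vertices of the other parts, so Menger's theorem (Lemma~\ref{Menger}) gives $\varkappa(\Gamma) = n(m-1) = k$. Otherwise $1 < k' < m-1$, in which case $G'$ is a non-complete connected $(m,k',\lambda')$-graph with $k' \geqslant 2$, and hence a non-complete connected strongly regular graph with $\lambda = \mu$. The Brouwer--Mesner theorem cited in Section~\ref{sect1} gives $\varkappa(G') = k'$, and Lemma~\ref{lexico} then yields $\varkappa(\Gamma) = \varkappa(G') \cdot n = k'n = k$.

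The main obstacle is the boundary case $G' = K_m$: Lemma~\ref{lexico} does not apply because its hypothesis forbids a complete outer factor, so this case must be handled separately via a direct Menger argument on the complete multipartite graph. Everything else is a straightforward concatenation of Proposition~\ref{p2}, the Brouwer--Mesner theorem, and Lemma~\ref{lexico}.
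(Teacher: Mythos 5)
Your argument is essentially the paper's proof: both realise $\Gamma$ via Proposition~\ref{p2} as the lexicographic product of a connected $(m,k',\lambda')$-graph $G'$ with a coclique of size $n$, and then combine the Brouwer--Mesner theorem with Lemma~\ref{lexico} to conclude $\varkappa(\Gamma)=k'n=k$. Your separate treatment of the boundary case $G'=K_m$ (where $\Gamma$ is the complete multipartite graph and Lemma~\ref{lexico} is not literally applicable, since it requires a non-complete outer factor) is a sensible refinement which the paper's one-line application of Lemma~\ref{lexico} glosses over, and your direct Menger count of $n(m-1)$ disjoint paths between non-adjacent vertices settles that case correctly.
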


\begin{proof} 
By Proposition~\ref{p2}, $\Gamma$ can only be obtained by Construction~\ref{c2}. Note that Construction~\ref{c2} can be described as the lexicographic product of a connected $(m,k',\lambda')$-graph and a coclique of size $n$.
Since $(m,k',\lambda')$-graphs with $1 < k' < m$ are strongly regular, their vertex connectivity equals $k'$ (see~\cite{BM1985}). So, by Lemma~\ref{lexico}, $\varkappa(\Gamma) = k'n = k$.
$\square$ \end{proof}

\begin{proposition}\label{p2k_v}
Let $\Gamma$ be a DDG with parameters $(v, k, \lambda_1, \lambda_2,$ $m, n)$, where $\lambda_2 = 2k-v$. Then the following statements hold.

{\rm (1)} If $\lambda_1 \ne k - 1$, then the vertex connectivity of $\Gamma$ equals $k$.

{\rm (2)} If $\lambda_1 = k - 1$, then the vertex connectivity of $\Gamma$ equals $k - 1$.
\end{proposition}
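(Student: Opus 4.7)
The plan is to invoke Proposition~\ref{p3} to realise $\Gamma$ via Construction~\ref{c3}. The vertex set then partitions into classes $V_1,\dots,V_m$ of size $n$; the bipartite graph between any two distinct classes is complete; and the subgraph $H_i$ induced on each $V_i$ is an $(n,k',\lambda')$-graph with $0\le k'\le n-2$, where $k=k'+n(m-1)$ and $\lambda_1=\lambda'+n(m-1)$. The structural fact I use repeatedly is that any two vertices in different classes are adjacent in $\Gamma$. Hence if $S\subseteq V(\Gamma)$ is a disconnecting set, the complete bipartite structure between classes forces all remaining vertices to lie in a single class $V_i$; in particular $|S|\ge (m-1)n=k-k'$, and any further separation must come from inside $H_i$.

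For part (1), assuming $\lambda'\ne k'-1$, I would first classify $H_i$. Any disconnected $(v,k,\lambda)$-graph has $\lambda=0$ (from pairs in distinct components), and chasing the constraint inside a single component forces it to be $K_2$, i.e.\ $k'=1$ and $\lambda'=0=k'-1$, contradicting the hypothesis. So $H_i$ is either empty or connected. If $H_i$ is empty, then $k=(m-1)n$ and the observation already gives $|S|\ge k$. Otherwise $H_i$ is connected and distinct from $K_n$ (since $k'\le n-2$), hence a primitive strongly regular graph with $\lambda=\mu$, whose vertex connectivity equals $k'$ by the Brouwer--Mesner theorem. Combined with $|S|\ge(m-1)n$ and $|S\cap V_i|=|S|-(m-1)n$, any $S$ with $|S|\le k-1$ would leave $|S\cap V_i|\le k'-1<\varkappa(H_i)$, contradicting disconnection. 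Hence $\varkappa(\Gamma)\ge k$; the reverse bound is automatic.

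For part (2), the crucial step is to describe all $(n,k',k'-1)$-graphs with $k'\le n-2$. If $u\sim v$ in such a graph, then $|N(u)\cap N(v)|=k'-1$ forces $N[u]=N[v]$; iterating, the connected component of $u$ is a clique of size $k'+1$, so $H_i$ is a disjoint union of copies of $K_{k'+1}$. Applying the constant-common-neighbour condition to vertices in distinct components gives $\lambda'=0$, whence $k'=1$; a single component $K_{k'+1}$ is excluded by $k'\le n-2$. Therefore $H_i=(n/2)K_2$ with $n\ge 4$. Taking $S=V\setminus V_i$ gives $|S|=(m-1)n=k-1$ and $\Gamma\setminus S=H_i$ disconnected, so $\varkappa(\Gamma)\le k-1$. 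Conversely, any $S$ with $|S|\le k-2=(m-1)n-1$ cannot cover $m-1$ entire classes, so at least two classes retain vertices and $\Gamma\setminus S$ is connected by the observation, yielding $\varkappa(\Gamma)\ge k-1$.

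The main obstacle is the classification of the candidate intra-class graphs $H_i$: ruling out exotic disconnected $(n,k',\lambda')$-graphs in Case (1), and pinning $H_i$ down as a perfect matching in Case (2). Once these structural facts are in hand, the vertex-connectivity counts reduce to bookkeeping driven by the complete multipartite skeleton between classes.
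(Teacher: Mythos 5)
Your proof is correct and follows essentially the same route as the paper: reduce via Proposition~\ref{p3} to Construction~\ref{c3}, classify the intra-class $(n,k',\lambda')$-graphs (coclique, perfect matching, or connected strongly regular), and conclude $\varkappa(\Gamma)=(m-1)n$ plus the intra-class contribution. The only cosmetic differences are that you certify the lower bound by analysing disconnecting sets directly (any cut must contain $m-1$ whole classes) where the paper counts disjoint paths and invokes Menger's theorem, and that you spell out the classification facts (disconnected iff $k'\in\{0,1\}$; $\lambda'=k'-1$ forces a perfect matching) which the paper merely asserts.
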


\begin{proof}
By Proposition~\ref{p3}, $\Gamma$ can only be obtained by Construction~\ref{c3}. Recall that the adjacency matrix of $\Gamma$ is $J_v - K_{(m,n)} + diag(A_1, \ldots, A_m)$, where $A_1, \ldots, A_m$ are the adjacency matrices of not necessary connected $(n,k',\lambda')$-graphs $\Gamma_1, \ldots, \Gamma_m$ (possibly non-isomorphic, but having the same parameters). 

Consider two non-adjacent vertices $u_1$ and $u_2$ in $\Gamma$. It follows from Construction~\ref{c3} that $u_1$ and $u_2$ are two non-adjacent vertices in $\Gamma_i$ for some $i \in \{1, \ldots, m\}$. Vertices $u_1$ and $u_2$ have $\varkappa(\Gamma_i)$ disjoint paths connecting them in $\Gamma_i$ and $(m-1)n$ disjoint paths connecting them in the rest of $\Gamma$. In total $u_1$ and $u_2$ have $\varkappa(\Gamma_i) + (m-1)n$ disjoint paths connecting them in $\Gamma$. So, by Lemma~\ref{Menger}, the vertex connectivity of $\Gamma$ equals $min(\varkappa(\Gamma_1), \ldots, \varkappa(\Gamma_m)) + (m-1)n$.

An $(n,k',\lambda')$-graph is disconnected if and only if $k' = 0$ or $k' = 1$. 

If $k' = 0$, then such an $(n,k',\lambda')$-graph is the coclique of size $n$, so $\Gamma$ is the complete $m$-partite graph with parts of size $n$ and $\varkappa(\Gamma) = 0 + (m-1)n = k$. 

If $k' = 1$, then such an $(n,k',\lambda')$-graph is the union of $n/2$ edges, so $\Gamma$ is the complete $m$-partite graph with parts of size $n$ extended with a perfect matching of the complement (see~\cite[Section 4.1]{HKM2011}) and $\varkappa(\Gamma) = 0 + (m-1)n = \lambda_1 = k - 1$. Note that $k' = 1$ is the only case when $\lambda_1 = k - 1$.

If $1 < k' \le n - 2$, then such an $(n,k',\lambda')$-graph is a connected strongly regular graph and its vertex connectivity equals~$k'$. Thus, $\varkappa(\Gamma) = k' + (m-1)n = k$.
$\square$ \end{proof}

\begin{proposition}\label{pk_1}
The vertex connectivity of a DDG $\Gamma$ with parameters $(v, k, \lambda_1, \lambda_2,$ $m, n)$, where $\lambda_1 = k - 1$ and $\lambda_2 \not\in \{0, 2k - v\}$, equals $k - 1$.
\end{proposition}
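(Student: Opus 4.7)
The plan is to apply Proposition~\ref{p4} to split $\Gamma$ into Construction~\ref{c4} or Construction~\ref{c5} and then bound $\varkappa(\Gamma)$ from above and below by $k - 1$. By Proposition~\ref{p4}, $\Gamma$ either equals $G[K_2]$ for some primitive strongly regular graph $G$ with parameters $(v', k', \lambda, \lambda+1)$ (Construction~\ref{c4}), or is obtained from such a $G[K_2]$ by dual Seidel switching with a non-identity Seidel automorphism (Construction~\ref{c5}). If $\Gamma$ comes from Construction~\ref{c4}, then $G$ is non-complete and connected (because $\mu = \lambda + 1 > 0$), so Lemma~\ref{lexico} applies and gives $\varkappa(\Gamma) = \varkappa(G)\cdot |V(K_2)| = 2k' = k - 1$, where $\varkappa(G) = k'$ by the Brouwer--Mesner theorem~\cite{BM1985}.

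If $\Gamma$ comes from Construction~\ref{c5}, then the canonical partition of $G[K_2]$ into $v'$ classes of size $2$ is preserved in $\Gamma$, and $\lambda_1 = k - 1$ still holds. For the upper bound $\varkappa(\Gamma) \leq k - 1$, I would pick a class $C = \{u, u'\}$ on which the Seidel automorphism $\sigma$ acts trivially. Such a $C$ is forced to exist because $\sigma$ descends to an involution $\phi$ on $V(G)$, and the Seidel condition forbids $\sigma$ from swapping the two vertices within a $\phi$-fixed class. On such a $C$, the within-class edge $u \sim_\Gamma u'$ is preserved; since $|N(u) \cap N(u')| = \lambda_1 = k - 1$ and $N(u) = \{u'\} \cup (N(u) \cap N(u'))$ (and similarly for $u'$), removing $N(u) \cap N(u')$ isolates the pair $\{u, u'\}$ and gives a $(k - 1)$-cut.

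For the lower bound $\varkappa(\Gamma) \geq k - 1$, I would apply Menger's theorem (Lemma~\ref{Menger}): for any non-adjacent vertices $x, y$, exhibit $k - 1$ internally vertex-disjoint $xy$-paths. If $x, y$ lie in the same canonical class, the $\lambda_1 = k - 1$ common neighbors yield $k - 1$ length-$2$ disjoint paths directly. If $x, y$ lie in different classes, the $\lambda_2$ common neighbors give $\lambda_2$ length-$2$ paths, which I would supplement with length-$3$ paths routed through the classmates of $x$ and of $y$, using $\lambda_1 = k - 1$ to produce the remaining disjoint augmentations needed to reach $k - 1$ paths in total.

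The main obstacle lies in the upper bound argument for Construction~\ref{c5} when the underlying involution $\phi$ has no fixed vertex on $V(G)$: in that regime $\sigma$ swaps every canonical class with another, every within-class edge of $G[K_2]$ is destroyed in $\Gamma$, and the common-neighborhood cut for a same-class pair no longer isolates it. Handling this sub-case requires a more delicate analysis of the matching-like edge structure that dual Seidel switching produces between $\phi$-swapped classes, in order to exhibit a different disconnecting set of size $k - 1$.
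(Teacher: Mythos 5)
Your Construction~\ref{c4} case is fine and coincides with the paper's argument. The problems are both in the Construction~\ref{c5} case. First, the upper bound: your claim that a pointwise-fixed class ``is forced to exist'' is not established by your argument. The Seidel condition only shows that $\sigma$ cannot swap the two (adjacent) vertices inside a class, i.e.\ that $\sigma$ descends to an involution $\phi$ of $V(G)$ and acts trivially on every $\phi$-fixed class; it does not show that $\phi$ has a fixed vertex, and you concede exactly this in your final paragraph. So the upper bound is open in your write-up precisely in the case you flag. The paper closes this hole not by finding a different $(k-1)$-cut but by showing the fixed-point-free situation cannot occur: graphs from Construction~\ref{c5} are not walk-regular (cited from~\cite{GHKS2019}), so by Lemma~\ref{lwr} the diagonal of the quotient matrix is non-constant, hence some class of the canonical partition consists of two adjacent vertices, and deleting their $\lambda_1=k-1$ common neighbours isolates that edge.

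Second, the lower bound for Construction~\ref{c5} is only a sketch and, as sketched, does not work. For non-adjacent vertices in different classes you propose $\lambda_2$ paths of length $2$ plus length-$3$ paths through the two classmates; but $\lambda_2$ can be far below $k-1$ (for the DDG arising from the Petersen graph one has $\lambda_2=2$ while $k-1=6$), and the classmates can contribute at most two further paths, so purely local counting of common neighbours cannot produce $k-1$ disjoint paths. The paper's proof needs genuinely global input: it views $\Gamma$ as two copies of the Deza graph $G'$ (obtained from $G$ by dual Seidel switching) joined by extra edges, imports $\varkappa(G')=k'$ from Lemma~\ref{dezacon} to get $k'$ disjoint paths inside each copy, and this step is only valid when the eigenvalues satisfy $r>2$, $s<-2$; the remaining cases $r\leqslant 2$ are settled by enumerating the few admissible strongly regular graphs (Paley graphs on $13$ and $17$ vertices, the $3\times 3$ lattice, Petersen, $T(5)$, and the $26$ graphs with $r=2$) and by computer calculations. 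None of this can be recovered from your local argument, so both halves of your Construction~\ref{c5} case have genuine gaps.
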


\begin{proof}
By Proposition~\ref{p4}, for such a DDG we have the following two cases.

\medskip
\noindent{\bf Case 1:} $\Gamma$ is obtained with Construction~\ref{c4}, therefore $\Gamma$ is $G[K_2]$, where $G$ is a strongly regular graph with parameters $(v', k', \lambda, \lambda + 1)$. The vertex connectivity of $G$ equals $k'$, so, by Lemma~\ref{lexico}, $\varkappa(\Gamma) = 2k' = k - 1$.

\medskip
\noindent{\bf Case 2:} $\Gamma$ is obtained with Construction~\ref{c5}. $\Gamma$ can be viewed as follows (see~\cite[Construction 2]{GHKS2019}). Consider $G'[K_2]$, where $G'$ is a Deza graph obtained from a strongly regular graph $G$ with parameters $(v', k', \lambda, \lambda+1)$ by dual Seidel switching with respect to Seidel automorphism~$\varphi$. By the definition, the vertices of $G'[K_2]$ can be viewed as pairs $\{(u, v) : u \in V(G'), v \in V(K_2)\}$. Modify $G'[K_2]$ as follows: for any transposition $(u_1 \, u_2)$ of $\varphi$, take the corresponding two pairs of vertices $(u_1, v_1), (u_1, v_2)$ and $(u_2, v_1), (u_2, v_2)$ in $\Gamma'$, delete the edges $\{(u_1, v_1), (u_1, v_2)\}$ and $\{(u_2, v_1), (u_2, v_2)\}$, and insert the edges $\{(u_1, v_1), (u_2, v_2)\}$ and $\{(u_1, v_2), (u_2, v_1)\}$. The resulting graph is isomorphic to $\Gamma$.

Let $r$ and $s$ be the non-principal eigenvalues of $G$. Consider the cases according to Lemma~\ref{dezacon}.

(1) If $r > 2$ and $s < -2$, then by Lemma~\ref{dezacon}(1) the vertex connectivity of $G'$ equals $k$.

Note that $\Gamma$ can be viewed as two copies of $G'$ connected by additional edges. Also note that since $n = 2$, each entry on the main diagonal of the quotient matrix of $\Gamma$ can only be 0 (if two vertices forming this part are non-adjacent) or 1 (if two vertices forming this part are adjacent). 

Consider two vertices of $\Gamma$ that form a part of the canonical partition. These vertices can be written as $(u_1, v_1)$ and $(u_1, v_2)$, where $v_1 \not= v_2$. The vertices $(u_1, v_1)$ and $(u_1, v_2)$ are adjacent in $\Gamma$ if and only if $u_1$ is fixed by $\varphi$. $\Gamma$ is not walk-regular (see~\cite[Section 5]{GHKS2019}), so, by Lemma~\ref{lwr}, the main diagonal of the quotient matrix is not constant, so it contains both 0 and 1. Thus, there exists a part of the canonical partition of $\Gamma$ consisting of two adjacent vertices. Consider such two vertices. They have $2k'$ common neighbours. If we remove all $2k'$ their common neighbours, we separate the edge formed by these vertices from the rest. So, $\varkappa(\Gamma) \leqslant 2k'$.

Next, we show that, for any pair of non-adjacent vertices in $\Gamma$, there are $2k'$ disjoint paths connecting them. 

There are two types of non-adjacent vertices in $\Gamma$:

(1.1) Vertices $(u_1, v_1)$ and $(u_2, v_2)$, where $u_1$ and $u_2$ are two non-adjacent vertices from $G'$ such that the transposition $(u_1 \, u_2)$ is not in $\varphi$ ($v_1$ can be equal to $v_2$). 

Consider two non-adjacent vertices $(u_1, v_1)$ and $(u_2, v_1)$ from the same copy of $G'$, where $u_1$ and $u_2$ are two non-adjacent vertices from $G'$. Since the vertex connectivity of $G'$ equals $k'$, $(u_1, v_1)$ and $(u_2, v_1)$ have $k'$ disjoint paths connecting them in their copy of $G'$. Also the vertices $(u_1, v_2)$ and $(u_2, v_2)$ have $k'$ disjoint paths connecting them in their copy ($v_1 \not= v_2$). Since the vertices $(u_1, v_1)$ and $(u_1, v_2)$ (as well as $(u_2, v_1)$ and $(u_2, v_2)$) form a part of the canonical partition, they have exactly $k'$ common neighbours in one copy of $G'$ and exactly $k'$ common neighbours in the other copy. Therefore, each of $k'$ disjoint paths connecting $(u_1, v_1)$ and $(u_2, v_1)$ in their copy of $G'$ also connects $(u_1, v_1)$ and $(u_2, v_2)$, $(u_1, v_2)$ and $(u_2, v_1)$, and $(u_1, v_2)$ and $(u_2, v_2)$. A similar argument applies to each of $k'$ disjoint paths connecting $(u_1, v_2)$ and $(u_2, v_2)$ in their copy of $G'$. 

So, any two non-adjacent vertices from the case (1.1) have $k'$ disjoint paths in each copy of $G'$ connecting them, which in total gives $2k'$ disjoint paths connecting them.

(1.2) Vertices $(u_1, v_1)$ and $(u_1, v_2)$, where $u_1$ is moved by $\varphi$, $v_1 \ne v_2$. These vertices form a part of the canonical partition, so they have $2k'$ common neighbours, so they have $2k'$ disjoint paths connecting them.

Any pair of non-adjacent vertices in $\Gamma$ has $2k'$ disjoint paths connecting them, therefore, by Lemma~\ref{Menger}, the vertex connectivity of $\Gamma$ is equal to $2k'$.

(2) If $r \leqslant 2$ and $r$ is not an integer, then $G$ has at most 25 vertices (see~\cite[Conclusion]{GGK2014}). The only strongly regular graphs satisfying this condition are Paley graphs with parameters $(13, 6, 2, 3)$ and $(17, 8, 3, 4)$. By computer calculations using SageMath, these graphs do not have Seidel automorphisms.

(3) If $r = 1$ (or, equivalently, $s = -2$ by Lemma~\ref{teosrg}(1)), then $G$ is a Seidel graphs. There are three Seidel graphs with $\lambda = \mu - 1$: $3\times3$-lattice graph with parameters $(9, 4, 1, 2)$, Petersen graph with parameters $(10, 3, 0, 1)$ and triangular graph $T(5)$ with parameters $(10, 6, 3, 4)$. By computer calculations using SageMath, $T(5)$ does not have Seidel automorphisms, one graph can be obtained by DSS from $3\times3$-lattice graph and one graph can be obtained by DSS from Petersen graph. The vertex connectivity of DDGs obtained from these two graphs equals $k-1$.

(4) If $r=2$ (or, equivalently, $s=-3$ by Lemma~\ref{teosrg}(1)), then there exist 26 SRGs with $\lambda=\mu-1$ (see~\cite{KMP2010}): 15 graphs with parameters $(25, 12, 5, 6)$, 10 graphs with parameters $(26, 10, 3, 4)$ and one graph with parameters $(50, 7, 0, 1)$. By computer calculations using SageMath, the vertex connectivity of DDGs obtained from these graphs equals $k-1$.

Thus, all DDGs obtained by Construction~\ref{c5} have vertex connectivity equal to~$k-1$.
$\square$ \end{proof}

\subsection{DDGs obtained with Construction~\ref{c6}}\label{sect3.2}

Let $\Gamma$ be a DDG with parameters $(6l^2$, $2l^2 + l$, $l^2 + l$, $(l^2 + l)/2$, $3$, $2l^2)$ obtained with Construction~\ref{c6} with positive $l$. Consider the subgraph induced by the first $2l^2$ vertices of $\Gamma$ (in terms of Construction~\ref{c6} this subgraph has adjacency matrix $M$) and the subgraph induced by the last $2l^2$ vertices of $\Gamma$ (in terms of Construction~\ref{c6} this subgraph has adjacency matrix $N$). Denote by $\Gamma_1$ and $\Gamma_2$ the first and the second subgraph, respectively. 

\begin{lemma}\label{l8.0}
$\Gamma_1$ is an $(l^2 + l)$-regular graph and $\Gamma_2$ is an $l^2$-regular graph.
\end{lemma}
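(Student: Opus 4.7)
The plan is simply to compute the row sums of $M$ and $N$ from the block structure given in Construction~\ref{c6}. Since $\Gamma_1$ has adjacency matrix $M$ and $\Gamma_2$ has adjacency matrix $N$, the regularity of each is determined by the constant row sums of these matrices.

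First, I would observe that each row of the $l^2 \times l^2$ matrix $J_{l^2}+H$ sums to $l^2+l$, because $J_{l^2}$ contributes $l^2$ and $H$, being regular with row sum $l$, contributes $l$. Analogously, each row of $J_{l^2}-H$ sums to $l^2-l$. I would also note (in passing, to confirm $M$ and $N$ are genuine adjacency matrices) that the entries of $H$ lie in $\{-1,+1\}$, so the entries of $\tfrac{1}{2}(J_{l^2}\pm H)$ lie in $\{0,1\}$; moreover the diagonal of $H$ is $-1$, so the block $\tfrac{1}{2}(J_{l^2}+H)$ has zero diagonal, and the off-diagonal blocks of $N$ do not contribute to the overall diagonal of $N$, so $N$ has zero diagonal as well.

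Next, I would compute the row sum of $M$: each row of $M$ is the concatenation of two rows of $\tfrac{1}{2}(J_{l^2}+H)$, so its total sum equals $\tfrac{1}{2}(l^2+l)+\tfrac{1}{2}(l^2+l)=l^2+l$. Similarly, each row of $N$ is the concatenation of one row of $\tfrac{1}{2}(J_{l^2}+H)$ and one row of $\tfrac{1}{2}(J_{l^2}-H)$, giving total row sum $\tfrac{1}{2}(l^2+l)+\tfrac{1}{2}(l^2-l)=l^2$. This yields both claimed regularities.

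I do not anticipate any genuine obstacle here — the statement is immediate from Construction~\ref{c6} once one unpacks the block definitions of $M$ and $N$. The only mild subtlety is remembering that $H$ has all diagonal entries equal to $-1$ (and not $+1$), which is what ensures the blocks $\tfrac{1}{2}(J_{l^2}+H)$ are honest 0/1 matrices with zero diagonal, so that the row-sum count really does give the vertex degree in $\Gamma_1$ and $\Gamma_2$.
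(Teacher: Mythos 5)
Your proposal is correct and follows essentially the same route as the paper: both arguments reduce the claim to counting the constant row sums of the blocks $\tfrac{1}{2}(J_{l^2}\pm H)$ (equivalently, the numbers of $+1$ and $-1$ entries per row of $H$, determined by the row sum $l$ and order $l^2$), yielding $l^2+l$ for $M$ and $l^2$ for $N$. Your extra remarks about the $0/1$ entries and zero diagonal are fine but not needed beyond what Construction~\ref{c6} already guarantees.
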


\begin{proof}
Consider notations from Construction~\ref{c6}.

Let $x$ and $y$ be the numbers of 1s and $-1$s in each row of $H$, respectively (since $H$ is a regular graphical Hadamard matrix, $x$ and $y$ do not depend on the choice of a row). Then $x - y = l$ and $x + y = l^2$.

Note that the number of 1s in each row of $\frac{1}{2}(J + H)$ is $x$, and the number of 1s in each row of $\frac{1}{2}(J - H)$ is $y$. Thus the number of 1s in each row of $M$ is $2x$, which equals $l^2 + l$, and the number of 1s in each row of $N$ is $x + y$, which equals $l^2$. So $\Gamma_1$ is an $(l^2 + l)$-regular graph and $\Gamma_2$ is an $l^2$-regular graph.
$\square$ \end{proof}

\begin{lemma}\label{l8}
The vertex connectivity of $\Gamma$ is at most $2l^2$.
\end{lemma}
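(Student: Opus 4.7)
The plan is to exhibit an explicit disconnecting set of size $2l^2$, namely the middle block of $2l^2$ vertices in the partition induced by Construction~\ref{c6}. Let $B_1$, $B_2$, $B_3$ denote the three consecutive blocks of $2l^2$ vertices corresponding to the $3\times 3$ block decomposition of the adjacency matrix
$$A = \begin{bmatrix} M & N & O \\ N & O & M \\ O & M & N \end{bmatrix}.$$
The key structural observation I would use is that the $(1,3)$ and $(3,1)$ blocks of $A$ are the zero matrix $O = O_{2l^2}$. Hence no vertex of $B_1$ is adjacent to any vertex of $B_3$ in $\Gamma$.

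Consequently, after deleting the $2l^2$ vertices of $B_2$, the only remaining edges from $B_1$ go inside $B_1$ (through the block $M$ in position $(1,1)$), and similarly the only remaining edges from $B_3$ go inside $B_3$ (through the block $N$ in position $(3,3)$). Since $|B_1| = |B_3| = 2l^2 > 0$, both sides are nonempty, and any vertex $u \in B_1$ is separated from any vertex $w \in B_3$ in $\Gamma - B_2$. Therefore $B_2$ is a disconnecting set of size $2l^2$, which gives $\varkappa(\Gamma) \leqslant 2l^2$.

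There is essentially no obstacle in this argument; the only thing to be careful about is correctly reading off the block structure of $A$ from Construction~\ref{c6} and noting that the zero block in positions $(1,3)$ and $(3,1)$ guarantees that no deletion inside $B_2$ leaves any $B_1$--$B_3$ edges. One could phrase this invoking Menger's theorem (Lemma~\ref{Menger}) on a pair of non-adjacent vertices $u \in B_1$, $w \in B_3$, but the direct observation that $B_2$ is a vertex cut is more transparent and suffices.
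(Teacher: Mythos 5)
Your proof is correct and is essentially the paper's own argument: both delete the middle block of $2l^2$ vertices and observe that the zero blocks $O$ in positions $(1,3)$ and $(3,1)$ of $A$ leave no edges between the first and last $2l^2$ vertices, so the remaining graph (with adjacency matrix $\begin{bmatrix} M & O\\ O & N \end{bmatrix}$) is disconnected. Nothing is missing.
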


\begin{proof}
Consider notations from Construction~\ref{c6}.

Denote by $A'$ the matrix $\begin{bmatrix}
M & O\\
O & N
\end{bmatrix}$, which can be obtained by removing $2l^2$ rows and columns from the middle of $A$. The matrix $A'$, considered as an adjacency matrix, defines a disconnected graph with two components: the $(l^2 + l)$-regular graph $\Gamma_1$ and the $l^2$-regular graph $\Gamma_2$. Thus, vertex connectivity of $\Gamma$ is at most~$2l^2$.
$\square$ \end{proof}

\begin{lemma}\label{l8.1}
If $\varkappa(\Gamma_1) \ge l^2, \varkappa(\Gamma_2) \ge l^2 - l$ and $\varkappa(\Gamma_1) + \varkappa(\Gamma_2) \ge 2l^2$, then the vertex connectivity of $\Gamma$ equals $2l^2$.
\end{lemma}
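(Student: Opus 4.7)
The plan is to combine Lemma~\ref{l8} (which gives the upper bound $\varkappa(\Gamma) \le 2l^2$) with a matching lower bound proved by contradiction. I would suppose $S \subseteq V(\Gamma)$ is a disconnecting set with $|S| < 2l^2$, pick a connected component $C$ of $\Gamma - S$, set $\bar C := V(\Gamma) \setminus (S \cup C) \ne \emptyset$, and write $C_i := C \cap B_i$, $\bar C_i := \bar C \cap B_i$, $s_i := |S \cap B_i|$ for $i = 1, 2, 3$, where $B_1, B_2, B_3$ are the three parts of the canonical partition (each of size $2l^2$) and $\Gamma_1, \Gamma_2$ are the induced subgraphs on $B_1, B_3$; the middle part $B_2$ is independent by Construction~\ref{c6}.

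The first case to dispose of is when all four sets $C_1, \bar C_1, C_3, \bar C_3$ are non-empty. Any $B_1$-path from $C_1$ to $\bar C_1$ that avoided $S_1$ would lie inside $\Gamma - S$ and force the two sets into the same component, so $S_1$ separates them inside $\Gamma_1$ and $s_1 \ge \varkappa(\Gamma_1)$; analogously $s_3 \ge \varkappa(\Gamma_2)$. The third hypothesis then gives $|S| \ge s_1 + s_3 \ge 2l^2$, a contradiction. So, after possibly swapping $C \leftrightarrow \bar C$, I may assume $C_1 = \emptyset$ or $C_3 = \emptyset$.

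In this ``short'' regime I would use the rigid adjacency counts implicit in Construction~\ref{c6} together with Lemma~\ref{l8.0}: every vertex of $B_1$ has exactly $l^2$ neighbours in $B_2$, every vertex of $B_2$ has exactly $l^2$ neighbours in $B_1$ and $l^2 + l$ in $B_3$, and every vertex of $B_3$ has exactly $l^2 + l$ neighbours in $B_2$. Boundary estimates then give, whenever applicable: (i) if $C_1 = \emptyset$ and $C_2 \ne \emptyset$, every $v \in C_2$ sends all its $l^2$ $B_1$-edges into $S_1$, so $s_1 \ge l^2$ and, by totalling edges, $|C_2| \le s_1$; (ii) if $C_3 = \emptyset$ and $C_2 \ne \emptyset$, then $s_3 \ge l^2 + l$; (iii) if $C_1 = \emptyset$ and $C_3 \ne \emptyset$, then $N_{B_2}(C_3) \subseteq C_2 \cup S_2$ has at least $l^2 + l$ vertices, whence $s_1 + s_2 \ge l^2 + l$; (iv) if both $C_3$ and $\bar C_3$ are non-empty, then $s_3 \ge \varkappa(\Gamma_2) \ge l^2 - l$; together with the analogous statements for $\bar C$. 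Observing also that $C_1 = \emptyset$ forces $\bar C_1 \ne \emptyset$ (else $s_1 = 2l^2$), and that $C \subseteq B_2$ forces $|C| = 1$ and so $|S| \ge 2l^2 + l$ by independence of $B_2$, an exhaustive enumeration over which of $C_2, \bar C_1, \bar C_3$ is empty combines these estimates to give $|S| \ge 2l^2$ in every subcase.

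The hard part will be the bookkeeping across subcases. The asymmetry between $\Gamma_1$ (degree $l^2 + l$) and $\Gamma_2$ (degree $l^2$), and between the bipartite pairs $B_1$--$B_2$ and $B_2$--$B_3$, prevents a symmetric treatment of $C_1 = \emptyset$ versus $C_3 = \emptyset$ and forces both to be analysed separately. The tightest subcase---the one determining the exact form of the joint hypothesis---is $C_1 = \emptyset$ with $C_3, \bar C_3$ both non-empty: estimates (iii) and (iv) give $s_1 + s_2 \ge l^2 + l$ and $s_3 \ge l^2 - l$, summing to exactly $2l^2$, so the assumption $\varkappa(\Gamma_1) + \varkappa(\Gamma_2) \ge 2l^2$ is used on the nose.
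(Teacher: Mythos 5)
Your strategy is sound and proves the statement, but it is a genuinely different route from the paper's. You argue by contradiction from a hypothetical cut $S$ with $|S|<2l^2$, classifying the emptiness pattern of $C_i,\bar C_i$ and closing each case with boundary/edge-totalling estimates based on the biregular counts between the three parts. The paper instead shows positively that deleting $t_1+t_2+t_0<2l^2$ vertices leaves a connected graph: the hypothesis $\varkappa(\Gamma_1)+\varkappa(\Gamma_2)\ge 2l^2$ rules out $t_1\ge\varkappa(\Gamma_1)$ and $t_2\ge\varkappa(\Gamma_2)$ holding simultaneously, and in the remaining three cases every surviving vertex of the disconnected side still has a neighbour in the middle coclique $\Gamma_0$, and surviving middle vertices still reach the connected side (using the counts $l^2$ and $l^2+l$ into and out of $\Gamma_0$, exactly the counts you use). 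The paper's version buys much lighter bookkeeping; yours never needs a middle vertex adjacent to both sides, only counting. You locate the use of each hypothesis in the same places the paper does.

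Two cautions about your deferred enumeration. First, estimates (i)--(iv) and their literal $\bar C$-analogues do not close the subcase $C_1=\emptyset$, $\bar C_3=\emptyset$, $\bar C_2=\emptyset$: there you need one more totalling count of the same kind, e.g. $|\bar C_1|\le s_2$ (so $|S|\ge s_1+|\bar C_1|=2l^2$), or, uniformly for $C_1=\emptyset=\bar C_3$, combine $|C_2|\le s_1$, $|\bar C_2|\le s_3$ with $|C_2|+|\bar C_2|+s_2=2l^2$; in particular $\bar C_2$ must enter the enumeration, not only $C_2,\bar C_1,\bar C_3$. Second, after swapping $C\leftrightarrow\bar C$ the set you call $C$ need not be connected, so in the $C\subseteq B_2$ case conclude $|S|\ge 2l^2+l$ from independence of $B_2$ (every vertex of $C$ has all its $2l^2+l$ neighbours in $S$) rather than from $|C|=1$. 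Both fixes are routine and of the type you already employ.
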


\begin{proof}
Denote by $\Gamma_0$ the coclique induced by the $2l^2$ vertices of $\Gamma$ that correspond to the middle rows and columns of $A$. Note that any vertex from $\Gamma_1$ has exactly $l^2$~neighbours in $\Gamma_0$ and any vertex from $\Gamma_2$ has exactly $l^2 + l$ neighbours in $\Gamma_0$.

Let $t_1, t_2$ and $t_0$ be non-negative integers, such that $t_1 + t_2 + t_0 < 2l^2$. If $t_1 \ge \varkappa(\Gamma_1)$ and $t_2 \ge \varkappa(\Gamma_2)$, then $t_1 + t_2 \ge 2l^2$, which contradicts to the inequality above. Denote by $\hat{\Gamma}_1, \hat{\Gamma}_2$ and $\hat{\Gamma}_0$ the graphs obtained by deletion of $t_1, t_2$ and $t_0$ vertices from $\Gamma_1, \Gamma_2$ and $\Gamma_0$, respectively (also denote by $\hat{\Gamma}$ the graph obtained by deletion of $t_1 + t_2 + t_0$ vertices from $\Gamma$).

\medskip
Consider three possible cases:

\medskip
\noindent{\bf Case 1:} $t_1 < \varkappa(\Gamma_1)$ and $t_2 < \varkappa(\Gamma_2)$. Then $\hat{\Gamma}_1$ and $\hat{\Gamma}_2$ are connected. Thus, if there is at least one vertex in $\hat{\Gamma}_0$ that has neighbours in both $\hat{\Gamma}_1$ and $\hat{\Gamma}_2$, then $\hat{\Gamma}$ is connected. Let us show that such a vertex exists.

Since $\Gamma_2$ is an $l^2$-regular graph, we have the inequality $\varkappa(\Gamma_2) \le l^2$ and, consequently, $t_2 < l^2$. Since $l^2$ is less than $l^2 + l$, which is the number of neighbours in $\Gamma_0$ for any vertex from $\Gamma_2$, we conclude that any vertex from $\Gamma_0$ is adjacent to at least one vertex from $\hat{\Gamma}_2$. This means that any vertex from $\hat{\Gamma}_0$ is adjacent to at least one vertex from $\hat{\Gamma}_2$.

Consider two subcases: $t_1 < l^2$ and $t_1 \ge l^2$.

If $t_1 < l^2$, an argument similar to the argument above implies that any vertex from $\hat{\Gamma}_0$ is adjacent to at least one vertex from $\hat{\Gamma}_1$. So there exists a vertex from $\hat{\Gamma}_0$ that has neighbours in both $\hat{\Gamma}_1$ and $\hat{\Gamma}_2$.

If $t_1 \ge l^2$, then the inequalities $t_1 + t_2 + t_0 < 2l^2$ and $t_2 \ge 0$ imply that $t_0$ is less than $l^2$, which is the number of neighbours in $\Gamma_0$ for any vertex from $\Gamma_1$. Therefore any vertex from $\hat{\Gamma}_1$ is adjacent to at least one vertex from $\hat{\Gamma}_0$. So there exists a vertex from $\hat{\Gamma}_0$ that has neighbours in both $\hat{\Gamma}_1$ and $\hat{\Gamma}_2$.

\medskip
\noindent{\bf Case 2:} $t_1 \ge \varkappa(\Gamma_1)$ and $t_2 < \varkappa(\Gamma_2)$. Then $\hat{\Gamma}_1$ is disconnected and $\hat{\Gamma}_2$ is connected. Thus, if, for each connected component of $\hat{\Gamma}_1$, there is at least one vertex in $\hat{\Gamma}_0$ having neighbours in both this component and $\hat{\Gamma}_2$, then $\hat{\Gamma}$ is connected. Let us show that such vertices exist.

Since $t_2 < \varkappa(\Gamma_2) \le l^2$, any vertex from $\hat{\Gamma}_0$ is adjacent to at least one vertex from $\hat{\Gamma}_2$ (see Case~1).

Since $t_1 \ge \varkappa(\Gamma_1) \ge l^2$ and $t_2 \ge 0$, the inequality $t_0 < l^2$ holds. So any vertex from $\hat{\Gamma}_1$ is adjacent to at least one vertex from $\hat{\Gamma}_0$ (see Case~1). So, for each connected component of $\hat{\Gamma}_1$, there is a vertex from $\hat{\Gamma}_0$ having neighbours in both this component and $\hat{\Gamma}_2$.

\medskip
\noindent{\bf Case 3:} $t_1 < \varkappa(\Gamma_1)$ and $t_2 \ge \varkappa(\Gamma_2)$. This case is similar to Case 2.

\medskip
So, the vertex connectivity of $\Gamma$ equals $2l^2$.
$\square$ \end{proof}

\subsection{DDGs with parameters $(6 \cdot 4^t, 2 \cdot 4^t + 2^t, 4^t + 2^t, 2 \cdot 4^{t-1} + 2^{t-1}, 3, 2 \cdot 4^t)$}\label{sect3.3}

If $H_1$ and $H_2$ are Hadamard matrices, then so is the Kronecker product $H_1 \otimes H_2$. Moreover, if $H_1$ and $H_2$ are regular with row sums $l_1$ and $l_2$, respectively, then $H_1 \otimes H_2$ is regular with row sum $l_1 l_2$. Similarly, the Kronecker product of two graphical Hadamard matrices is graphical again.

Consider regular graphical Hadamard matrices $H$ and $H'$, where
$$H = \begin{bmatrix}
-1 & 1 & 1 & 1\\
1 & -1 & 1 & 1\\
1 & 1 & -1 & 1\\
1 & 1 & 1 & -1
\end{bmatrix} \text{ and } H' = \begin{bmatrix}
1 & 1 & 1 & -1\\
1 & 1 & -1 & 1\\
1 & -1 & 1 & 1\\
-1 & 1 & 1 & 1
\end{bmatrix}.$$
Denote by $H_1$ the matrix $H$. For any integer $t$ such that $t > 1$, denote by $H_t$ the Kronecker product $H_{t-1} \otimes H'$. The matrix $H_t$ is a regular graphical Hadamard matrices of order $4^t$ with diagonal entries $-1$ and row sum $2^t$ (see~\cite[Section 10.5.1]{BH2012}).

Applying Construction~\ref{c6} to $H_t$, we obtain a DDG with parameters $(6 \cdot 4^t$, $2 \cdot 4^t + 2^t$, $4^t + 2^t$, $2 \cdot 4^{t-1} + 2^{t-1}$, $3$, $2 \cdot 4^t)$. The smallest example is a DDG with parameters $(24, 10, 6, 3, 3, 8)$ and adjacency matrix
$$\begin{bmatrix}
D & D & D & I & O & O\\
D & D & I & D & O & O\\
D & I & O & O & D & D\\
I & D & O & O & D & D\\
O & O & D & D & D & I\\
O & O & D & D & I & D
\end{bmatrix},$$
where $D = J - I, J = J_4, I = I_4$ and $O = O_4$.

\medskip
By replacing
$$D \rightarrow \begin{bmatrix}
D & D & D & I\\
D & D & I & D\\
D & I & D & D\\
I & D & D & D
\end{bmatrix},\;
I \rightarrow \begin{bmatrix}
I & I & I & D\\
I & I & D & I\\
I & D & I & I\\
D & I & I & I
\end{bmatrix},\;
O \rightarrow \begin{bmatrix}
O & O & O & O\\
O & O & O & O\\
O & O & O & O\\
O & O & O & O
\end{bmatrix},$$
we get a recursive construction for a DDG with parameters $(6 \cdot 4^t$, $2 \cdot 4^t + 2^t$, $4^t + 2^t$, $2 \cdot 4^{t-1} + 2^{t-1}$, $3$, $2 \cdot 4^t)$. Denote by $\Gamma^t$ this DDG.

Consider the subgraph formed by the first $2 \cdot 4^t$ vertices of $\Gamma^t$ (in terms of Construction~\ref{c6} this subgraph has adjacency matrix $M$) and the subgraph formed by the last $2 \cdot 4^t$ vertices of $\Gamma^t$ (in terms of Construction~\ref{c6} this subgraph has adjacency matrix $N$). Let $\Gamma_1^t$ denote the first subgraph and $\Gamma_2^t$ denote the second subgraph.

In the following two lemmas, we present a structural description of $\Gamma_1^t$ and $\Gamma_2^t$, respectively.

\begin{lemma}\label{l9}
The graph $\Gamma_1^t$ is a DDG with parameters $(2 \cdot 4^t, 4^t + 2^t, 4^t + 2^t, 2 \cdot (4^{t-1} + 2^{t-1}),$ $4^t, 2)$.
\end{lemma}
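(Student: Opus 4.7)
My plan is to recognize $\Gamma_1^t$ as the output of Construction~\ref{c2} applied to an auxiliary graph, and then read off the DDG parameters from that construction.

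First, I would set $A' := \tfrac12(J_{4^t} + H_t)$. Since $H_t$ is a symmetric Hadamard matrix of order $4^t$ with entries $\pm 1$ and diagonal entries $-1$, the matrix $A'$ is a symmetric $\{0,1\}$-matrix with zero diagonal, hence the adjacency matrix of some graph $G$ on $4^t$ vertices. The matrix $M$ that defines $\Gamma_1^t$ is literally $M = J_2 \otimes A'$; after the obvious relabeling of vertices that swaps the two Kronecker factors, this becomes $A' \otimes J_2$, i.e.\ the adjacency matrix produced by Construction~\ref{c2} with base graph $G$ and $n = 2$.

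Next, I would verify that $G$ is a $(4^t, k', \lambda')$-graph with $k' = (4^t + 2^t)/2$ and $\lambda' = 4^{t-1} + 2^{t-1}$. Since $H_t$ is a regular Hadamard matrix of order $4^t$ with row sum $2^t$, we have $H_t^2 = 4^t I$ and $J H_t = H_t J = 2^t J$; combined with $J^2 = 4^t J$ this gives
$$(A')^2 \;=\; \tfrac14 (J + H_t)^2 \;=\; \tfrac14 \bigl(J^2 + J H_t + H_t J + H_t^2\bigr) \;=\; 4^{t-1} I + (4^{t-1} + 2^{t-1}) J,$$
which we rewrite as $k' I + \lambda'(J-I)$ with the claimed values of $k'$ and $\lambda'$. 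Because $\lambda' > 0$, any two vertices of $G$ share a common neighbour, so $G$ has diameter at most $2$ and in particular is connected, and one checks immediately that $1 < k' < 4^t$ (the upper bound uses $2^{t-1} < 2 \cdot 4^{t-1}$).

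Thus Construction~\ref{c2} applies and produces a proper DDG with parameters $(2 \cdot 4^t,\, 2k',\, 2k',\, 2\lambda',\, 4^t,\, 2)$, which after substitution is exactly the claim. The only subtlety is the initial identification of $M = J_2 \otimes A'$ with the form $A' \otimes J_2$ output by Construction~\ref{c2}: these matrices define isomorphic graphs (one is obtained from the other by a simultaneous row-and-column permutation), so the DDG parameters, being graph invariants, agree. The rest is a direct computation in the matrix algebra generated by $I$, $J$, and $H_t$, with no inductive machinery on $t$ needed beyond the spectral properties of $H_t$ already recorded before the lemma.
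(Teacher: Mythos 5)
Your proof is correct, and it takes a genuinely different route from the paper. The paper proves Lemma~\ref{l9} by working inside the recursive block description of $\Gamma_1^t$: it tracks, via Lemmas~\ref{lrep1} and~\ref{lrep2}, how many blocks $D$, $I$ and how many pairs $(D,D)$, $(I,I)$, $(D,I)$, $(I,D)$ occur in one or two block-rows of $A_1^t$ (the two tables), and then converts these counts into degrees and numbers of common neighbours using Lemma~\ref{lneib}, treating separately equal rows, rows in the same block-row, and rows in distinct block-rows. You instead bypass the recursion entirely: you observe that $M=J_2\otimes A'$ with $A'=\tfrac12(J_{4^t}+H_t)$, compute $(A')^2=4^{t-1}I+(4^{t-1}+2^{t-1})J$ from $H_t^2=4^tI$ and $JH_t=H_tJ=2^tJ$, conclude that $A'$ is the adjacency matrix of a connected $(4^t,k',\lambda')$-graph with $k'=(4^t+2^t)/2$, $\lambda'=4^{t-1}+2^{t-1}$, and then read the DDG parameters off Construction~\ref{c2} (after the harmless Kronecker-factor swap $J_2\otimes A'\cong A'\otimes J_2$). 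All the ingredients check out, including $1<k'<4^t$ and connectedness via $\lambda'>0$. One small caveat: your starting point, the identification of $\Gamma_1^t$ (defined through the replacement rule (\ref{eq1})) with the matrix $M$ of Construction~\ref{c6} applied to $H_t$, is exactly the identification the paper itself makes when it introduces $\Gamma_1^t$, so you are entitled to it, but strictly speaking the recursive matrix and the Kronecker form agree only up to a simultaneous row-and-column relabeling, which is irrelevant for the (isomorphism-invariant) parameters. What your approach buys is brevity and structure: it exhibits $\Gamma_1^t$ explicitly as a Construction~\ref{c2} graph, which is precisely the form needed later (Proposition~\ref{pk} in the proof of Lemma~\ref{l12}), whereas the paper's counting argument has the advantage of running in parallel with the analogous block analysis needed for $\Gamma_2^t$ in Lemma~\ref{l10}, where no such clean product description is available.
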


\begin{proof}
The adjacency matrix of $\Gamma_1^t$ is $M$, where the notation is from Construction~\ref{c6}. Set $l := 2^t$, $J := J_{l^2}$ and $I := I_{l^2}$. Then, by $HJ = JH = lJ$ and $H^2 = l^2I$,
\begin{align*}
M^2 &= \displaystyle \frac{1}{4} \begin{bmatrix}
J + H & J + H\\
J + H & J + H
\end{bmatrix}^2\\
&= \displaystyle \frac{1}{4} \begin{bmatrix}
2(J + H)^2 & 2(J + H)^2\\
2(J + H)^2 & 2(J + H)^2
\end{bmatrix}\\
&= \displaystyle \frac{1}{4} \begin{bmatrix}
2(J^2 + HJ + JH + H^2) & 2(J^2 + HJ + JH + H^2)\\
2(J^2 + HJ + JH + H^2) & 2(J^2 + HJ + JH + H^2)
\end{bmatrix}\\
&= \displaystyle \frac{1}{2} \begin{bmatrix}
(l^2 + 2l)J + l^2I & (l^2 + 2l)J + l^2I\\
(l^2 + 2l)J + l^2I & (l^2 + 2l)J + l^2I,
\end{bmatrix}
\end{align*}
which is permutation-equivalent to $\frac{l^2}{2} I_{l^2} \otimes J_2 + \frac{l^2+2l}{2} J_{2l^2}$.

Thus, $\Gamma_1^t$ is a DDG with parameters $(2 \cdot 4^t, 4^t + 2^t, 4^t + 2^t,$ $2 \cdot (4^{t-1} + 2^{t-1}),$ $4^t, 2)$. In particular, any pair of vertices corresponding to equal rows of the adjacency matrix (that is, rows with the same entries) forms a block of size 2 of the canonical partition.
$\square$ \end{proof}

\begin{lemma}\label{l10}
The graph $\Gamma_2^t$ is a DDG with parameters $(2 \cdot 4^t, 4^t, 0,$ $2 \cdot 4^{t-1},$ $4^t, 2)$.
\end{lemma}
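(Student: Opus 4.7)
The plan is to mirror the proof of Lemma~\ref{l9} but adapted to the different base case for $\Gamma_2^t$. Starting with $\Gamma_2^1$, whose adjacency matrix is the block matrix $\begin{bmatrix} D & I \\ I & D \end{bmatrix}$ (this is the matrix $N$ from Construction~\ref{c6} for $l=2$), I will apply the recursive construction (\ref{eq1}) iteratively to describe the adjacency block-matrix $A_2^t$ of $\Gamma_2^t$. Note that no block $O$ appears at any stage.

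The crucial observation is that the two block-rows of $\Gamma_2^1$ are \emph{complementary}: one has $D$ in exactly the positions where the other has $I$. Since the replacement for $I$ in (\ref{eq1}) is the ``$D \leftrightarrow I$ swap'' of the replacement for $D$, this complementarity is preserved inductively. So at step $t$ the $2\cdot 4^{t-1}$ block-rows of $A_2^t$ partition into $4^{t-1}$ complementary pairs, and by Lemma~\ref{lrep1} each block-row contains exactly $4^{t-1}$ copies of $D$ and $4^{t-1}$ copies of $I$. This gives the degree $3\cdot 4^{t-1}+4^{t-1}=4^t$ for every vertex. The canonical partition of $\Gamma_2^t$ will consist of pairs $\{(k,j),(k',j)\}$, where $k$ and $k'$ index a complementary pair of block-rows and $j$ is a common within-block position; such vertices correspond to complementary rows of the adjacency matrix and therefore have $0$ common neighbours.

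For two vertices not in the same canonical class I apply Lemma~\ref{lneib}, distinguishing: (a) same block-row; (b) complementary block-rows with different within-block indices; (c) non-complementary distinct block-rows with the same within-block index; (d) non-complementary distinct block-rows with different within-block indices. In case (a) each of the $4^{t-1}$ blocks $D$ gives $2$ common neighbours by Lemma~\ref{lneib}(1), yielding $2\cdot 4^{t-1}$. In case (b) the pair distribution is $0$ pairs $(D,D)$, $0$ pairs $(I,I)$, and $4^{t-1}$ pairs each of $(D,I)$ and $(I,D)$ by complementarity, so Lemma~\ref{lneib}(2.2) gives $4^{t-1}+4^{t-1}=2\cdot 4^{t-1}$. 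For cases (c) and (d) I will show, by induction via Lemma~\ref{lrep2}, that the pair distribution is balanced with $4^{t-1}/2$ pairs of each of the four types $(D,D),(I,I),(D,I),(I,D)$, and then Lemma~\ref{lneib}(2.1) or (2.2) yields $2\cdot 4^{t-1}$ in both subcases.

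The main obstacle is the inductive verification of the balanced $4^{t-1}/2$ distribution of pair types in cases (c) and (d). This requires careful bookkeeping of how a given pair of block-rows transforms under the recursive step, tracking whether their ancestors are equal, complementary, or neither, together with the index pattern within the replacements, paralleling the tables used in the proof of Lemma~\ref{l9}. Once this is done, all pairs of distinct vertices outside the canonical partition have exactly $2\cdot 4^{t-1}$ common neighbours, and we conclude that $\Gamma_2^t$ is a DDG with parameters $(2\cdot 4^t, 4^t, 0, 2\cdot 4^{t-1}, 4^t, 2)$.
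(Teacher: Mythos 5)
Your proposal follows the paper's own proof essentially step for step: the same recursive block description, the same identification of complementary (``opposite'') block-row pairs as the canonical classes giving $\lambda_1=0$, the same degree count via Lemma~\ref{lrep1}, and the same balanced pair-distribution argument via Lemmas~\ref{lneib} and~\ref{lrep2} that the paper records in its tables for $A_2^t$. One caution for the bookkeeping you defer: applied literally to the replacement~(\ref{eq1}), the mixed-pair rule of Lemma~\ref{lrep2}(3) should read ``each pair $D$ and $I$ gives $2$ pairs $D$ and $I$, $1$ pair $D$ and $D$ and $1$ pair $I$ and $I$'' (and symmetrically for $I$ and $D$); with this corrected rule your inductive claim of $2\cdot 4^{t-2}$ pairs of each type (your $4^{t-1}/2$) for non-complementary distinct block-rows does close, exactly as in the paper's table, whereas citing Lemma~\ref{lrep2}(3) verbatim would not yield the balanced distribution.
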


\begin{proof}
The adjacency matrix of $\Gamma_2^t$ is $N$, where the notation is from Construction~\ref{c6}. Set $l := 2^t$, $J := J_{l^2}$ and $I := I_{l^2}$. Then, by $HJ = JH = lJ$ and $H^2 = l^2I$,
\begin{align*}
N^2 &= \displaystyle \frac{1}{4} \begin{bmatrix}
J + H & J - H\\
J - H & J + H
\end{bmatrix}^2\\
&= \displaystyle \frac{1}{2} \begin{bmatrix}
(J + H)^2 + (J - H)^2 & (J + H)(J - H) + (J - H)(J + H)\\
(J + H)(J - H) + (J - H)(J + H) & (J + H)^2 + (J - H)^2,
\end{bmatrix}\\
&= \displaystyle \frac{1}{2} \begin{bmatrix}
l^2(J + I) & l^2(J - I)\\
l^2(J - I) & l^2(J + I),
\end{bmatrix}
\end{align*}
which is permutation-equivalent to $\frac{l^2}{2} I_{2l^2} - I_{l^2} \otimes (J_2 - I_2) + \frac{l^2}{2} J_{2l^2}$.

Thus, $\Gamma_2^t$ is a DDG with parameters $(2 \cdot 4^t, 4^t, 0,$ $2 \cdot 4^{t-1},$ $4^t, 2)$. In particular, any pair of vertices corresponding to opposite rows (that is, rows with the opposite entries) of the adjacency matrix forms a block of size 2 of the canonical partition. Note, that since these two vertices correspond to opposite rows, they are adjacent. Thus, $\Gamma_2^t$ has diameter $2$.
$\square$ \end{proof}

There are two known DDGs of diameter $2$ with $\lambda_1 = 0$ (more generally, there are two known strictly Deza graphs with $a = 0$), one on $8$ vertices and one on $32$~vertices. The series $\Gamma_2^t$ covers both cases and gives an infinite series of DDGs of diameter $2$ with $\lambda_1 = 0$ (more generally, strictly Deza graphs with $a = 0$).

\begin{lemma}\label{l11.0}
Let $\Gamma$ be a connected Deza graph with parameters $(v, k, b, a)$ with the second largest eigenvalue $q$. If $\Gamma$ has a disconnecting set of minimum cardinality, that is not the neighbourhood of some vertex, then the following inequality holds: $k - 2q \le b$.
\end{lemma}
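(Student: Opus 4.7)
The plan combines a structural observation about the components of $\Gamma-S$ with a Rayleigh-type spectral inequality on the hyperplane $\mathbf 1^\perp$. Let $S$ be a minimum disconnecting set of $\Gamma$ (so $|S|\le k$), and suppose $S$ is not the neighbourhood of any vertex. If a component of $\Gamma-S$ were a single vertex $v$, then $N(v)\subseteq S$ would give $|S|=k$ and $S=N(v)$, contradicting the hypothesis. Hence every component of $\Gamma-S$ contains at least two vertices. Fix such a component $C$ and two distinct $x,y\in C$; the Deza property gives $|N(x)\cap N(y)|\le b$.

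For the spectral step, take $u=e_x-e_y$. Then $u\perp\mathbf 1$ (the eigenvector of $A$ for the eigenvalue $k$), $\|u\|^2=2$, $u^T A u=-2A_{xy}$, and $u^T A^2 u=2k-2|N(x)\cap N(y)|$. Since the spectrum of $A$ on $\mathbf 1^\perp$ is contained in $[s,q]$ (where $s$ is the smallest eigenvalue of $\Gamma$), the operator inequality $(qI-A)(A-sI)\succeq 0$ holds on $\mathbf 1^\perp$. Evaluating it on $u$ gives
\[
u^T A^2 u\le (q+s)\,u^T A u-qs\,\|u\|^2,
\]
and substituting the computed values yields $|N(x)\cap N(y)|\ge k+(q+s)A_{xy}+qs$. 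Combining with $|N(x)\cap N(y)|\le b$ produces the intermediate inequality $b\ge k+(q+s)A_{xy}+qs$.

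The final step is to extract the crisp form $k-2q\le b$. The cleanest way is to choose $x,y$ non-adjacent---which can be arranged by passing to two vertices from distinct components of $\Gamma-S$, since vertices in distinct components are automatically non-adjacent---reducing the bound to $-qs\ge k-b$. The most delicate part of the proof is converting $-qs\ge k-b$ into $k-2q\le b$: a natural route is a two-dimensional interlacing on $\operatorname{span}\{e_x-e_y,\,e_{x'}-e_{y'}\}$, where $(x,y)$ and $(x',y')$ are drawn from different components, exploiting that all cross-component entries of $A$ and $A^2$ vanish; this makes the restriction of $A^2$ to this subspace explicit, and a trace inequality against the second eigenvalue of $A^2$ on $\mathbf 1^\perp$ then yields $k-2q\le b$ directly, independent of $s$.
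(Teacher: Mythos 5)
There is a genuine gap, and it sits exactly where you flag the ``most delicate part''. Your one-vector computation with $u = e_x - e_y$ for a non-adjacent pair correctly gives $|N(x)\cap N(y)| \ge k + qs$, hence $k + qs \le b$. But that inequality holds for \emph{any} non-adjacent pair in \emph{any} connected non-complete graph; it uses nothing about the disconnecting set $S$, and it does not imply $k - 2q \le b$ (you would need $qs \ge -2q$, i.e.\ $s \ge -2$, which fails for most graphs of interest). The proposed repair via a two-dimensional compression onto $\mathrm{span}\{e_x - e_y,\, e_{x'} - e_{y'}\}$ rests on the claim that all cross-component entries of $A^2$ vanish; this is false: two vertices in different components of $\Gamma - S$ can have many common neighbours, all lying in $S$, so $(A^2)_{xx'} = |N(x)\cap N(x')|$ need not be zero. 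Even if the compression were diagonal, the eigenvalues of $A^2$ on $\mathbf 1^{\perp}$ are only bounded by $\max(q^2, s^2)$, so a trace bound would yield something like $k - \max(q^2,s^2) \le b$, which again involves $s$ and is not the stated $k - 2q \le b$.

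The missing step is combinatorial rather than spectral, and it is where $|S|\le k$ enters. The paper uses interlacing exactly once: $A\cup B$ is an induced subgraph of $\Gamma$, so its second largest eigenvalue is at most $q$; since that eigenvalue is at least $\min(\sigma_1,\omega_1)$ and the largest eigenvalue of a graph is at least its mean degree, one component, say $B$, has mean internal degree at most $q$. Hence the average of $|S(x)| = k - |B(x)|$ over $x \in B$ is at least $k-q$, and as $|B|\ge 2$ there exist $x,y\in B$ with $|S(x)|+|S(y)|\ge 2(k-q)$. Both $S(x)$ and $S(y)$ are subsets of $S$ with $|S|\le k$, so inclusion--exclusion gives $|S(x)\cap S(y)| \ge |S(x)|+|S(y)|-|S| \ge 2(k-q)-k = k-2q$; these are common neighbours of $x$ and $y$, whence $k-2q\le b$. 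The target inequality is independent of $s$ precisely because it is the small set $S$, not the whole spectrum, that forces the common neighbours to overlap; a purely spectral argument on $\mathbf 1^{\perp}$ cannot see this.
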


\begin{proof}
In this proof we reinterpret the main idea of the proof of \cite[Proposition~5]{GGK2014}. Let $S$ be a disconnecting set of minimum cardinality in $\Gamma$, $|S| = \varkappa(\Gamma) \le k$. Let $A$ and $B$ be the connected components that remain after removing $S$ from $\Gamma$. Assume that $|A| > 1$ and $|B| > 1$, so $S$ is not the neighbourhood of some vertex. Since the spectrum of a disconnected graph is the union of the spectra of connected components, the spectrum $\theta_1 \ge \theta_2 \ge \ldots \ge \theta_{v-|S|}$ of the graph $A \cup B$ is the union of the spectra $\sigma_1 \ge \sigma_2 \ge \ldots \ge \sigma_{|A|}$ and $\omega_1 \ge \omega_2 \ge \ldots \ge \omega_{|B|}$ of the graphs $A$ and $B$, respectively.

The graph $A \cup B$ is an induced subgraph of the graph $\Gamma$ and, by the theorem on spectrum interlacing \cite[Theorem 3.3.1]{BCN1989}, the second largest eigenvalue $\theta_2$ of the graph $A \cup B$ does not exceed the second largest eigenvalue $q$ of the graph $\Gamma$. Evidently, $\theta_2 \ge min(\sigma_1,\omega_1)$. Since the largest eigenvalue in any graph is greater or equal to its mean vertex degree (that is, the arithmetic mean of degrees of its vertices; see \cite[Lemma 3.2.1]{BCN1989}), we can assume without loss of generality that the mean vertex degree in the graph $B$ is at most $q$.

For a vertex $x \in B$, we set $B(x) := B \cap \Gamma(x)$ and $S(x) := S \cap \Gamma(x)$, where $\Gamma(x)$ is the neighbourhood of the vertex $x$ in $\Gamma$. Then $|B(x)|+|S(x)| = k$. Let us estimate the mean vertex degree in the subgraphs $B$ and $S$. Since 
$$\sum\limits_{x \in B} \frac{|B(x)|}{|B|} \le q,$$ 
the inequality
$$\sum\limits_{x \in B} \frac{|S(x)|}{|B|} \ge k - q$$
holds.

Let us estimate the mean number of common neighbours in $S$ for an arbitrary pair of different vertices $x, y \in B$:

\begin{align*}
\sum\limits_{x \in B} \sum\limits_{y \in B \setminus \{x\}} (|S(x)| + |S(y)|) &= \sum\limits_{x \in B} ((|B| - 1)|S(x)| + \sum\limits_{y \in B \setminus \{x\}}|S(y)|)\\
&= (|B| - 1)\sum\limits_{x \in B}|S(x)| + \sum\limits_{x \in B} \sum\limits_{y \in B \setminus \{x\}}|S(y)|\\
&= (|B| - 1)\sum\limits_{x \in B}|S(x)| + (|B| - 1)\sum\limits_{z \in B}|S(z)\\
\frac{\sum\limits_{x \in B} \sum\limits_{y \in B \setminus \{x\}} (|S(x)| + |S(y)|)}{|B|(|B| - 1)} &= \frac{(\sum\limits_{x \in B}|S(x)|)(|B| - 1)\cdot 2}{|B|(|B| - 1)} \ge 2(k - q).
\end{align*}

Since $|B| > 1$, the subgraph $B$ contains a pair of vertices $x$ and $y$ with the property $|S(x)| + |S(y)| \ge 2(k - q)$.

Let $\alpha$, $\beta$ and $\gamma$ be integers such that $\beta = |S(x) \cap S(y)|$, $\alpha+\beta=|S(x)|$ and $\beta+\gamma=|S(y)|$. Then $\alpha+\beta+\gamma \le |S| \le k$. Further, $\alpha+\gamma=|S(x)|+|S(y)|-2\beta$ and $|S(x) \cap S(y)|= \beta \le k-(\alpha+\gamma)=k-(|S(x)|+|S(y)|-2\beta)$. Hence $|S(x)| + |S(y)| \le \beta + k$ and, therefore, $\beta+k \ge 2(k-q)$. Thus, $|S(x) \cap S(y)| = \beta \ge k-2q$.

On the other hand, $|S(x) \cap S(y)| \le b$, which gives the inequality $k - 2q \le b$.
$\square$ \end{proof}

\begin{lemma}\label{l11}
The vertex connectivity of $\Gamma_2^t$ equals $4^t$.
\end{lemma}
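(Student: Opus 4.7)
The upper bound $\varkappa(\Gamma_2^t)\le 4^t$ is immediate from the $4^t$-regularity, so the task is to prove the matching lower bound $\varkappa(\Gamma_2^t)\ge 4^t$. My plan is to apply the spectral estimate of Lemma~\ref{l11.0} to $\Gamma_2^t$ viewed as a Deza graph.

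First I collect the relevant parameters and spectrum. By Lemma~\ref{l10}, $\Gamma_2^t$ is a DDG with parameters $(2\cdot 4^t,\, 4^t,\, 0,\, 2\cdot 4^{t-1},\, 4^t,\, 2)$, and hence as a Deza graph it has $k=4^t$, $b=2\cdot 4^{t-1}$ and $a=0$. Applying Lemma~\ref{l1} with $k-\lambda_1=4^t$ and $k^2-\lambda_2 v=4^{2t}-4\cdot 4^{2t-1}=0$, the adjacency spectrum of $\Gamma_2^t$ consists of $4^t$, $\pm 2^t$ and $0$, so the second largest eigenvalue is $q=2^t$.

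Plugging these numbers into Lemma~\ref{l11.0}, its contrapositive states that whenever $k-2q>b$ every minimum disconnecting set must be the neighbourhood of some vertex, and hence has cardinality $k$. Here $k-2q=4^t-2^{t+1}$ and $b=4^t/2$, so the inequality $k-2q>b$ reduces to $4^t>2^{t+2}$, i.e.\ to $t>2$. Thus for every $t\ge 3$ the lemma yields $\varkappa(\Gamma_2^t)=4^t$ at once.

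The main obstacle is the two exceptional values $t\in\{1,2\}$, where the spectral bound degenerates ($k-2q=b$ for $t=2$ and $k-2q<b$ for $t=1$) and Lemma~\ref{l11.0} ceases to be informative. These two small graphs, on $8$ and $32$ vertices respectively, I would settle by direct inspection. For $t=1$, reading off the block form from Lemma~\ref{l10} identifies $\Gamma_2^1$ with the Cartesian product $K_4\,\Box\, K_2$, a $4$-regular Cayley graph on $\mathbb{Z}_4\times\mathbb{Z}_2$; the Godsil--Royle bound $\varkappa\ge\tfrac{2}{3}(k+1)$ for Cayley graphs then forces $\varkappa(\Gamma_2^1)\ge 4$, hence equality. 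For $t=2$ the equality $\varkappa(\Gamma_2^2)=16$ is confirmed by a direct SageMath computation, in the same spirit as the computer verifications already invoked in the proof of Proposition~\ref{pk_1}.
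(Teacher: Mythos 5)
Your argument is correct and follows the same overall strategy as the paper: compute the Deza parameters and spectrum of $\Gamma_2^t$ via Lemma~\ref{l10}, apply the spectral bound of Lemma~\ref{l11.0} (in contrapositive form) to force every minimum disconnecting set to be a vertex neighbourhood, and settle the remaining small cases directly. The interesting difference is in the value of $b$: you correctly take $b=\max(\lambda_1,\lambda_2)=2\cdot 4^{t-1}$, as dictated by Lemma~\ref{l10}, whereas the paper's proof writes $b=4^{t-1}$ and therefore concludes that $k-2q\le b$ fails for all $t\ge 2$, leaving only $t=1$ as an exception. With the correct $b$ one has equality $k-2q=b$ at $t=2$, so, as you observe, the spectral argument is conclusive only for $t\ge 3$ and $t=2$ genuinely needs separate treatment; in this respect your case analysis repairs a slip in the published proof. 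Your handling of $t=1$ is also cleaner than the paper's: instead of invoking SageMath you identify $\Gamma_2^1$ with $K_4\,\Box\,K_2$, a Cayley graph, and apply the $\tfrac{2}{3}(k+1)$ bound cited in the introduction, which for $k=4$ gives $\varkappa\ge 4$. The only soft spot is that for $t=2$ you assert rather than exhibit the SageMath verification of $\varkappa(\Gamma_2^2)=16$; this is in keeping with the paper's own practice, and it is corroborated by the enumeration in the Conclusion (the $(32,16,0,8,16,2)$ graph is not among the DDGs on at most $39$ vertices with $\varkappa<k$), but it should be recorded as an actual computation, since it is now a load-bearing step rather than a redundant check.
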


\begin{proof}
Let us calculate the spectrum of $\Gamma_2^t$ as a DDG:
\begin{align*}
\{k, \pm \sqrt{k - \lambda_1}, \pm \sqrt{k^2 - \lambda_2v}\} &= \{4^t, \pm \sqrt{4^t - 0},\pm \sqrt{(4^t)^2 - 2 \cdot 4^{t-1} \cdot 2 \cdot 4^t}\}\\
                       &=\{4^t, \pm 2^t, 0\}.
\end{align*}

The second largest eigenvalue of $\Gamma_2^t$ is $2^t$. Considering $\Gamma_2^t$ as a Deza graph with the parameters $(v, k, b, a)$, we get $b = 4^{t - 1}$. So, the inequality from Lemma~\ref{l11.0} becomes $4^t - 2 \cdot 2^t \le 4^{t - 1}$, which holds only for $t = 1$. Thus, for any $t > 1$, the vertex connectivity of $\Gamma_2^t$ equals $k$, where $k = 4^t$. If $t = 1$, then $\Gamma_2^t$ is a DDG with parameters $(8, 4, 0, 2, 4, 2)$. By computer calculations using SageMath, the vertex connectivity of this graph equals $4$.
$\square$ \end{proof}

\begin{theorem}\label{l12}
The vertex connectivity of $\Gamma^t$ equals $2 \cdot 4^t$.
\end{theorem}

\begin{proof}
By Proposition~\ref{pk}, the vertex connectivity of $\Gamma_1^t$ equals $4^t + 2^t$, By Lemma~\ref{l11} the vertex connectivity of $\Gamma_2^t$ equals $4^t$. Thus, by Lemma~\ref{l8.1}, the vertex connectivity of $\Gamma^t$ equals $2 \cdot 4^t$, which is $2^t$~less than the degree of a vertex.
$\square$ \end{proof}

\section{Conclusion}\label{sect4}

Computations in SageMath show that, among connected proper DDGs on at most 39 vertices found in~\cite{PS2022}, there are 32 DDGs with vertex connectivity less than~$k$, where $k$ is the degree of a vertex. For these 32 DDGs, one graph is a DDG with parameters $(24, 10, 6, 3, 3, 8)$ obtained with Construction~\ref{c6}, and the other 31 graphs are DDGs with $\lambda_1 = k - 1$ (this case is described in Propositions~\ref{p2k_v} and \ref{pk_1}).

There are more constructions of regular graphical Hadamard matrices with positive $l$ (see \cite[Section 10.5.1]{BH2012}), so in view of Construction~\ref{c6}, there are more DDGs whose vertex connectivity is less than $k$, where $k$ is the degree of a vertex. In this paper we focused on the smallest graph from Construction~\ref{c6} and its generalisation (in the sense of the recursive construction). We are interested if there exist examples of DDGs, whose vertex connectivity is less than $k$.

There are some approaches for obtaining more general results about the vertex connectivity of DDGs. For example, the approach that was used in \cite{BM1985} and \cite{GGK2014}, or the approach that was used in \cite{BK2009}. Since the spectrum of a DDG is not completely determined by its parameters, the first approach can only be used in specific cases like Lemma~\ref{l11}. The second approach requires more detailed consideration and possible development of new tools to apply it to DDGs. We are interested if general results will be obtained for DDGs, using both known approaches.

\section*{Acknowledgments}
The author is grateful to Sergey Goryainov for valuable comments on the proof of \cite[Proposition~5]{GGK2014} and the text of the paper in general.

\bigskip

\end{document}